%
%
\documentclass[a4paper,12pt,twoside,leqno,final]{amsart}
\usepackage{amsmath}
\usepackage{amssymb}

\setlength{\textwidth}{15cm}
\setlength{\textheight}{22cm}
\setlength{\oddsidemargin}{2cm}
\setlength{\hoffset}{-2cm}
\setlength{\voffset}{-1cm}

\newtheorem{thm}{Theorem}[section]
\newtheorem{lem}[thm]{Lemma}

\newtheorem{prop}[thm]{Proposition}
\newtheorem{cor}[thm]{Corollary}

\newcommand{\C}{{\mathbb C}}
\newcommand{\D}{{\mathbb D}}
\newcommand{\R}{{\mathbb R}}
\newcommand{\T}{{\mathbb T}}
\newcommand{\Z}{{\mathbb Z}}
\newcommand{\N}{{\mathbb N}}

\newcommand{\bmo}{{\rm BMO}}
\newcommand{\bmoa}{{\rm BMOA}}

\newcommand{\Al}{A^\alpha}

\newcommand{\La}{\Lambda}

\renewcommand{\sb}{\subset}

\newcommand{\eps}{\varepsilon}

\newcommand{\f}{\frac}
\newcommand{\ov}{\overline}
\newcommand{\al}{\alpha}

\newcommand{\Ga}{\Gamma}
\newcommand{\ga}{\gamma}

\newcommand{\ze}{\zeta}
\renewcommand{\th}{\theta}
\newcommand{\si}{\sigma}
\newcommand{\ph}{\varphi}

\newcommand{\Om}{\Omega}

\newcommand{\const}{\text{\rm const}}

\newcommand{\tg}{T_{\bar g}}
\newcommand{\kbmo}{K_{*\th}}
\newcommand{\kinf}{K^\infty_\th}

\renewcommand{\qedsymbol}{$\square$}
\numberwithin{equation}{section}

\title[Smooth analytic functions and model subspaces]
{Smooth analytic functions\\ 
and model subspaces}
\dedicatory{Dedicated to the memory of Cora Sadosky}
\author{Konstantin M. Dyakonov}
\address{ICREA, BGSMath and Universitat de Barcelona, Departament de 
Matem\`atiques i Inform\`atica, Gran Via 585, E-08007 Barcelona, Spain}
\email{konstantin.dyakonov@icrea.cat}
\keywords{Lipschitz--Zygmund classes, Dirichlet-type spaces, canonical factorization, Toeplitz operator, 
Hankel operator, inner function, model subspace} 
\subjclass[2010]{30H10, 30H35, 30J05, 46E15, 46E35, 46J15, 47B35} 
\thanks{Supported in part by grants MTM2011-27932-C02-01, MTM2014-51834-P from El Ministerio de 
Econom\'ia y Competitividad (Spain) and grant 2014-SGR-289 from AGAUR (Generalitat de Catalunya).}

\begin{document}
\begin{abstract}
The main themes of this survey are as follows: (a) the canonical (Riesz--Nevanlinna) factorization in various 
classes of analytic functions on the disk that are smooth up to its boundary, and (b) model subspaces (i.e., 
invariant subspaces of the backward shift) in the Hardy spaces $H^p$ and in $\bmoa$. It is the interrelationship 
and a peculiar cross-fertilization between the two topics that we wish to highlight. 
\end{abstract}

\maketitle

\section{Introduction}

Our first topic in this survey is the multiplicative structure in spaces of {\it smooth analytic functions}. 
This phrase may sound somewhat redundant, if not downright confusing, since every analytic function is 
automatically smooth (in any reasonable sense) on its domain. The term becomes perfectly meaningful, 
though, if \lq\lq smooth" is interpreted as \lq\lq smooth up to the boundary". It is indeed the boundary 
smoothness of analytic functions that interests us here. 

\par Our functions will live on the disk $\D:=\{z\in\C:\,|z|<1\}$. 
Putting the smoothness issue aside (but only for a short while), let us now recall 
a bit of function theory on the disk. Suppose that $f$ is analytic on $\D$ and not too large 
near the unit circle $\T:=\partial\D$. Specifically, assume that $f$ lies in some {\it Hardy 
space} $H^p$ with $0<p\le\infty$. By definition, this means -- in addition to analyticity -- that 
$$\|f\|_{H^p}:=\sup_{0<r<1}\left(\int_\T|f(r\ze)|^pdm(\ze)\right)^{1/p}<\infty$$
if $0<p<\infty$, or $\|f\|_{H^\infty}:=\sup_\D|f|<\infty$ if $p=\infty$. 
Here and below, $m$ denotes the normalized arclength measure on $\T$. It is well known that 
$H^p$ functions have boundary values (nontangential limits) $m$-almost everywhere on $\T$. 
We may then identify $H^p$ with a subspace of $L^p=L^p(\T,m)$ bearing in mind that the above 
norm, $\|\cdot\|_{H^p}$, agrees on $H^p$ with the standard $L^p$-norm $\|\cdot\|_p$ over $\T$ 
(see \cite[Chapter II]{G}). When $0<p<1$, the two quantities should actually be called quasinorms 
rather than norms. 

\par For $f$ as above, the function $\ph:=|f|\big|_\T$ will satisfy $\ph\in L^p$ 
and $\log\ph\in L^1$. Moreover, these last two conditions characterize the moduli 
of $H^p$ functions on $\T$. Now, letting $u:=\log\ph$ and writing $\mathcal Pu$ for 
the harmonic extension (via the Poisson integral) of $u$ from $\T$ into $\D$, we define the 
{\it outer function} $\mathcal O_\ph$ as the (essentially unique) analytic function on $\D$ 
satisfying $\log|\mathcal O_\ph(z)|=\mathcal Pu(z)$. This done, we have $\mathcal O_\ph\in H^p$ 
and $|\mathcal O_\ph|=\ph$ a.\,e. on $\T$. The ratio $f/\mathcal O_\ph=:\th$ will then be 
an {\it inner function}; that is, $\th\in H^\infty$ and $|\th|=1$ a.\,e. on $\T$. Thus we arrive 
at the Canonical Factorization Theorem: the general form of an $f\in H^p$ is given by $f=\th F$, 
where $\th$ is inner and $F$ outer (so that $F=\mathcal O_\ph$ for some $\ph$ as above). 
A further factorization formula for inner functions allows us to express $\th$ canonically in terms 
of its zeros $\{a_n\}$ (these are only required to satisfy $\sum_n(1-|a_n|)<\infty$) and a certain 
singular measure $\mu$ on $\T$; see \cite[Chapter II]{G}. In summary, the original function $f\in H^p$ 
is fully described by the parameters $\ph$, $\{a_n\}$ and $\mu$ that emerge; and any choice 
of parameters gives rise to an $f\in H^p$ via factorization. 

\par The terms \lq\lq inner function" and \lq\lq outer function" were coined by Beurling. Why did he 
call them that? An amusing, but rather controversial, explanation I have heard is that the identity 
$f=\th F$, when written in {\it this} specific form, has $\th$ (the \lq\lq inner factor") inside 
and $F$ (the \lq\lq outer factor") outside. Observe that in some noncommutative generalizations, which 
we do not touch upon, the order may become crucial; and yes, it should be $\th F$ rather than $F\th$. 

\par While quite a bit of modern 1-D complex analysis has evolved in an attempt to extend the $H^p$ 
theory to {\it larger} analytic spaces, one also feels tempted to look at {\it smaller} (nicer) 
classes, in particular, at those populated by smooth analytic functions. Here, the good news is that 
the canonical factorization theorem applies. The bad news is, however, that the parameters cannot be 
chosen freely. Indeed, most inner functions -- actually, all the \lq\lq interesting" (i.\,e., nonrational) 
ones -- are highly oscillatory, hence discontinuous, at some points of $\T$. Consequently, the product 
$\th F$ may only be smooth on $\T$ if the outer factor, $F$, is good enough and kills the singularities 
of the (bad) inner factor, $\th$. To find an explicit quantitative expression of this interplay, for 
a given \lq\lq smooth analytic space", is therefore one problem to be dealt with. 

\par Our second topic is the {\it model subspaces}, alias {\it star-invariant subspaces}, in $H^p$ 
and in $\bmoa:=\bmo\cap H^1$, where $\bmo=\bmo(\T)$ is the space of functions of bounded mean oscillation 
on $\T$ (see \cite[Chapter VI]{G}). In $H^2$, the model subspace $K_\th$ generated by an inner function 
$\th$ is, by definition, the orthogonal complement of the shift-invariant subspace $\th H^2$. Thus, 
\begin{equation}\label{eqn:kate}
K_\th\left(=K^2_\th\right):=H^2\ominus\th H^2.
\end{equation}
It is a reproducing kernel Hilbert space, whose kernel function $k_z$ associated with a point $z\in\D$ is 
given by 
$$k_z(\ze)=\f{1-\ov{\th(z)}\th(\ze)}{1-\ov z\ze}.$$ 
This last function is therefore in $K_\th$ for every $z$, and every $f\in K_\th$ satisfies 
$$f(z)=\int_\T f(\ze)\ov{k_z(\ze)}\,dm(\ze),\qquad z\in\D.$$ 
It is straightforward to verify that $K_\th=H^2\cap\th\,\ov{H^2_0}$, and we further define $K^p_\th$ 
(the $H^p$-analogue of $K_\th$) by putting 
$$K^p_\th:=H^p\cap\th\,\ov{H^p_0},\qquad1\le p\le\infty,$$ 
where $H^p_0:=\{f\in H^p:f(0)=0\}$ and the bar denotes complex conjugation. For smaller 
$p$'s, a more reasonable definition appears to be 
$$K^p_\th:=\text{\rm clos}_{H^p}K_\th,\qquad0<p<1.$$ 
These subspaces play a crucial role in the Sz.-Nagy--Foia\c{s} operator 
model (see \cite{N}), which accounts for the terminology. 
Now, the term \lq\lq star-invariant" means invariant 
under the backward shift operator $f\mapsto(f-f(0))/z$, and it 
follows from Beurling's theorem (see \cite[Chapter II]{G}) that 
the general form of a closed and nontrivial star-invariant subspace 
in $H^2$ is indeed given by \eqref{eqn:kate}, with $\th$ inner. 
A similar fact is true for $H^p$ when $1\le p<\infty$. 
\par Finally, we put  
$$\kbmo:=K_\th\cap\bmoa.$$ 
When equipped with the $\bmo$-norm $\|\cdot\|_*$, $\kbmo$ becomes a star-invariant subspace of $\bmoa$; 
in fact, it is the annihilator in $\bmoa$ of the shift-invariant subspace $\th H^1$ in $H^1$. 
Of course, $\kbmo$ contains $\kinf$ and is contained in every $K^p_\th$ with $0<p<\infty$. 
\par While each of the two topics just mentioned has received quite a bit of attention in its own 
right, the intimate interconnection between them does not seem to have been noticed (until recently) 
or explored in any detail. It is precisely the systematic exploitation of this interrelationship, perhaps 
a kind of duality, between the two subjects that is characteristic of our approach. In fact, the 
three stories told in the next three sections are intended to show that results and methods pertaining 
to one of our themes cast new light on the other, and vice versa. 

\par Before moving any further, we need to recall the notions of Toeplitz and Hankel operators, since 
these will be crucial in what follows. We let $P_+$ and $P_-$ denote the orthogonal projections from 
$L^2$ onto $H^2$ and onto $\ov{H^2_0}$, respectively. Thus, 
$$(P_+F)(z):=\sum_{n\ge0}\widehat F(n)z^n\quad\text{\rm and}\quad (P_-F)(z):=\sum_{n<0}\widehat F(n)z^n,$$ 
where $\widehat F(n):=\int_\T F(\ze)\ov\ze^ndm(\ze)$ is the $n$th Fourier coefficient of $F$. 
These operators are then extended to $L^p$ with $1<p<\infty$ (in which case they become bounded 
projections onto $H^p$ and $\ov{H^p_0}$, the classical M. Riesz theorem tells us) and furthermore 
to $L^1$ (even though $P_{\pm}(L^1)\not\sb L^1$). Next, given a measurable function $\psi$ on $\T$, we write 
$$T_\psi f:=P_+(\psi f)\quad\text{\rm and}\quad H_\psi f:=P_-(\psi f),$$ 
whenever $f\in H^1$ and $\psi f\in L^1$. The mapping $T_\psi$ (resp., $H_\psi$) is called the {\it Toeplitz} 
(resp., {\it Hankel}) {\it operator with symbol} $\psi$. 

\par In the special case where $\psi$ is analytic (i.e., $\psi\in H^1$), $T_\psi$ reduces to the multiplication 
map $f\mapsto f\psi$, defined at least on $H^\infty$. The Toeplitz operators with symbols in $\ov{H^1}$ are 
said to be {\it coanalytic}. It is also worth mentioning that the model subspace $K^p_\th$ (where $p\ge1$) 
or $\kbmo$, with $\th$ an inner function, is precisely the kernel of the coanalytic Toeplitz operator $T_{\ov\th}$ 
acting on $H^p$ or $\bmoa$. 

\par Because Toeplitz and Hankel operators were among Cora Sadosky's best beloved mathematical creatures, 
their appearance in this survey seems to be appropriate (and is, anyway, far from incidental to the subject 
matter). 

\par We conclude this introduction with a brief outline of the rest of the paper. In Sections 2 and 3, we look 
at certain smooth analytic spaces $X$ and seek to characterize the pairs $(f,\th)$, with $f\in X$ and $\th$ inner, 
which satisfy 
\begin{equation}\label{eqn:prodfth}
f\th\in X.
\end{equation}
Sometimes it is more natural to replace \eqref{eqn:prodfth} by 
\begin{equation}\label{eqn:prodforallk}
f\th^k\in X\,\,\,\text{\rm for all}\,\,\,k\in\N,
\end{equation}
and we are led to consider some other related conditions as well. In Section 2, the role of $X$ is played 
by the analytic Lipschitz--Zygmund spaces $\Al$ (see the beginning of that section for definitions), and the 
pairs $(f,\th)$ with property \eqref{eqn:prodforallk} are then explicitly described by a certain {\it smallness 
condition}, to be imposed on $|f|$ near the singularities of $\th$. Furthermore, the same smallness condition 
ensures that the multiplication operator $g\mapsto fg$ acts nicely on the model space $K^p_\th$, or perhaps 
on $K^p_{\th^n}$ with $n$ suitably large, by improving integrability properties of the functions therein. For 
instance, given $1<p<q<\infty$ and $\al=p^{-1}-q^{-1}$, we prove that multiplication by a function $f\in\Al$ 
maps $K^p_\th$ into $H^q$ if and only if it maps $\th$ into $\Al$ (so that \eqref{eqn:prodfth} holds with $X=\Al$). 
The case of smaller $p$'s and larger $\al$'s leads to a minor complication involving \eqref{eqn:prodforallk} in 
place of \eqref{eqn:prodfth}, and $K^p_{\th^n}$ in place of $K^p_\th$. 

\par In Section 3, our space $X$ is chosen from among the so-called Dirichlet-type spaces. Each of these is formed 
by the functions $f\in H^2$ whose coefficient sequence, $\{\widehat f(n)\}$, lies in a certain weighted $\ell^2$. 
An important special case is the classical {\it Dirichlet space} $\mathcal D$, the set of analytic functions 
$f$ on $\D$ whose derivative, $f'$, is square integrable over $\D$ with respect to the normalized area 
measure $A$; the (semi)norm $\|f\|_{\mathcal D}$ is then defined to be $\left(\int_\D|f'|^2dA\right)^{1/2}$. 
Among other things we recover, for $f\in\mathcal D$ and $\th$ inner, the identity 
\begin{equation}\label{eqn:carlident}
\|f\th\|^2_{\mathcal D}=\|f\|^2_{\mathcal D}+\int_\T|f|^2|\th'|dm,
\end{equation}
which forms part of Carleson's celebrated formula from \cite{Carl}. Moreover, we obtain similar -- but more 
sophisticated -- formulas for general Dirichlet-type spaces; these yield the smallness conditions on $f$ 
(in relation to $\th$) that are responsible for the interplay between the two factors in \eqref{eqn:prodfth}, 
for the current choices of $X$. When $X=\mathcal D$, the corresponding smallness condition reads 
$\int_\T|f|^2|\th'|dm<\infty$, as readily seen from \eqref{eqn:carlident}. Our approach to \eqref{eqn:carlident} 
is based on the fact that the quantity $\|f\th\|_{\mathcal D}$ coincides with the Hilbert--Schmidt norm of 
the Hankel operator $H_{\ov{f\th}}$ acting from $H^2$ to $\ov{H^2_0}$ (and similarly for $f$ in place of $f\th$). 
Now let $\{g_n\}$ be an orthonormal basis in the model subspace $K_\th$. Since $H^2=\th H^2\oplus K_\th$, the 
family $\{\th z^k\}_{k\ge0}\cup\{g_n\}$ is an orthonormal basis in $H^2$, and we may use it to compute the 
Hilbert--Schmidt norm of $H_{\ov{f\th}}$. This gives 
$$\|f\th\|^2_{\mathcal D}=\sum_{k\ge0}\left\|H_{\ov{f\th}}(\th z^k)\right\|^2_2
+\sum_n\left\|H_{\ov{f\th}}g_n\right\|^2_2,$$ 
and a further calculation shows that the two sums above reduce to the two terms on the right-hand side of 
\eqref{eqn:carlident}. A modification of the same technique allows us to handle the case of a generic 
Dirichlet-type space. 

\par In Section 4, we consider coanalytic Toeplitz operators on the model subspace $\kbmo$, and 
we obtain a criterion for such an operator to act boundedly from $\kbmo$ to a given analytic 
space $X$, under certain assumptions on the latter. Precisely speaking, the spaces $X$ that arise 
here naturally are those which enjoy the {\it $K$-property} of Havin. In other words, it will be 
assumed that every Toeplitz operator $T_{\ov h}$ with $h\in H^\infty$ maps $X$ boundedly into 
itself and satisfies $\|T_{\ov h}\|_{X\to X}\le\const\cdot\|h\|_\infty$. This property was introduced 
by Havin in \cite{H}, where he also verified it for a number of smooth analytic spaces. (It was further 
observed in \cite{H} that every space $X$ with the $K$-property admits division by inner factors: 
whenever $f\in X$ and $I$ is an inner function such that $f/I\in H^1$, it follows that $f/I\in X$.) 
Now, the appearance of the $K$-property in connection with model subspaces of $\bmoa$ seems to reveal 
yet another link between the two topics of concern. 

\par The content of Section 2 is essentially borrowed from the author's papers \cite{DSpb93,DAJM}, while 
Sections 3 and 4 are based on \cite{DSpb97} and \cite{DIUMJ}, respectively. It seems that a bit of 
self-plagiarism is unavoidable -- and hopefully pardonable -- under the circumstances. 

\section{Factorization in Lipschitz--Zygmund spaces}

This section deals with the {\it Lipschitz--Zygmund spaces} $\La^\al=\La^\al(\T)$ and their analytic 
subspaces $\Al$. For $0<\al<\infty$, the space $\La^\al$ is defined as the set of all (complex-valued) 
functions $f\in C(\T)$ that satisfy 
\begin{equation}\label{eqn:deflipzyg}
\|\Delta_h^nf\|_\infty=O(|h|^\al),\qquad h\in\R,
\end{equation}
where $\|\cdot\|_\infty$ is the $\sup$-norm on $\T$, $n$ is an integer with $n>\al$, 
and $\Delta_h^n$ denotes the $n$th order difference operator with step $h$. (As usual, the difference 
operators $\Delta_h^k$ are defined by induction: one puts $(\Delta_h^1f)(\ze):=f(e^{ih}\ze)-f(\ze)$ and 
$\Delta_h^kf:=\Delta_h^1\Delta_h^{k-1}f$.) It is well known that property \eqref{eqn:deflipzyg} does not 
depend on the choice of $n$, as long as $n>\al$, except possibly for the constant in the $O$-condition. 

\par The corresponding analytic subspaces are 
$$\Al:=\La^\al\cap H^\infty,\qquad0<\al<\infty.$$ 
Equivalently, by a theorem essentially due to Hardy and Littlewood, $\Al$ is formed by those holomorphic 
functions $f$ on $\D$ which obey the condition 
$$|f^{(n)}(z)|=O\left((1-|z|)^{\al-n}\right),\qquad z\in\D,$$
for some (and then every) integer $n$ with $n>\al$; here $f^{(n)}$ is the $n$th order derivative of $f$. 
The spaces $\La^\al$ and $\Al$ are then normed in a natural way. 

\par The main result of this section is Theorem \ref{thm:factlipzyg} below, which characterizes the pairs 
$(f,\th)$, with $f\in\Al$ and $\th$ inner, such that $f$ admits multiplication and/or division by every 
power of $\th$ in $\La^\al$. The characterization involves an explicit quantitative condition saying that 
$|f(z)|$ must decay at a certain rate as $z$ approaches the boundary along the sublevel set 
\begin{equation}\label{eqn:defsublevel}
\Om(\th,\eps):=\{z\in\D:\,|\th(z)|<\eps\}
\end{equation}
with $0<\eps<1$. Moreover, it turns out that the same decay condition provides a criterion for the multiplication 
operator $T_f:\,g\mapsto fg$ to map the model subspace $K^p_{\th^n}$ continuously into $H^q$, once the exponents 
are related appropriately. 

\begin{thm}\label{thm:factlipzyg} Suppose that $0<p<\infty$, $\max(1,p)<q<\infty$, $\al=p^{-1}-q^{-1}$, and 
$n$ is an integer with $np>1$. Assume also that $f\in\Al$ and $\th$ is an inner function.  
\par The following conditions are equivalent: 
\par{\rm (i)} $f\ov\th^k\in\La^{\al}$ for all $k\in\N$. 
\par{\rm (ii)} $f\ov\th^n\in\La^{\al}$. 
\par{\rm (iii)} The multiplication operator $T_f$ maps $K^p_{\th^n}$ boundedly into $H^q$. 
\par{\rm (iv)} For some (or every) $\eps\in(0,1)$, one has
\begin{equation}\label{eqn:decaycond}
|f(z)|=O((1-|z|)^\al)\quad\text{for}\quad z\in\Om(\th,\eps).
\end{equation}
\par{\rm (v)} $f\th^k\in\Al$ for all $k\in\N$. 
\par{\rm (vi)} $f\th^n\in\Al$. 
\end{thm}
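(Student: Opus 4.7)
The implications (i)$\Rightarrow$(ii) and (v)$\Rightarrow$(vi) are immediate upon specializing $k=n$. The plan is to treat (iv) as the hub, proving (vi)$\Rightarrow$(iv)$\Rightarrow$(v), (ii)$\Rightarrow$(iv)$\Rightarrow$(i), and the separate equivalence (iv)$\Leftrightarrow$(iii).

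For (vi)$\Rightarrow$(iv) (and the parallel (ii)$\Rightarrow$(iv)), fix $z\in\Omte$, set $\ze:=z/|z|\in\T$, and write $g:=f\th^n$. Since $f$ and $g$ both lie in $\Al$ and are therefore H\"older--Zygmund on the closed disk (measured via the differences $\De_h^m$ with $m>\al$), one has
\[
|g(z)-g(\ze)|=O((1-|z|)^\al),\qquad|f(z)-f(\ze)|=O((1-|z|)^\al).
\]
Combining these with $|g(\ze)|=|f(\ze)|$ on $\T$ and $|g(z)|\le\eps^n|f(z)|$ (which follows from $z\in\Omte$) yields $(1-\eps^n)|f(z)|=O((1-|z|)^\al)$, which is (iv). For (ii)$\Rightarrow$(iv) the identical scheme applies to $f\ov\th^n\in\La^\al(\T)$, whose harmonic extension into $\D$ enjoys the same H\"older--Zygmund bound up to the boundary.

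For (iv)$\Rightarrow$(v) (and likewise (iv)$\Rightarrow$(i)), I verify the Hardy--Littlewood criterion $|(f\th^k)^{(m)}(z)|=O((1-|z|)^{\al-m})$ for a fixed integer $m>\al$. Leibniz's rule reduces the estimate to controlling $f^{(j)}(\th^k)^{(m-j)}$; Schwarz--Pick yields $|(\th^k)^{(m-j)}(z)|\le C_{k,m}(1-|z|)^{j-m}$, and I split according to whether $z\in\Om(\th,\eps')$ or not, with $\eps'\in(\eps,1)$. Inside $\Om(\th,\eps')$, (iv) together with the derived estimate $|f^{(j)}(z)|=O((1-|z|)^{\al-j})$ (via Cauchy's inequality on a disk of radius $\sim(1-|z|)$ still lying in $\Omte$) covers every term; outside, the Hardy--Littlewood estimate for $f\in\Al$ itself gives the required bound. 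The argument for (iv)$\Rightarrow$(i) is analogous, using the Cauchy projection $P_+$ to realize $f\ov\th^k$ inside $\D$. The principal obstacle here is the geometric lemma that every $z\in\Omte$ admits a disk of comparable radius sitting inside a slightly larger sublevel set, which follows from standard estimates for sublevel sets of inner functions.

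Finally, for (iv)$\Leftrightarrow$(iii), I would use the reproducing-kernel structure of $K^p_{\th^n}$. For (iii)$\Rightarrow$(iv), test $T_f$ at $z_0\in\Omte$ on a suitable $H^p$-normalization of the kernel
\[
k_{z_0}^{\th^n}(\ze)=\f{1-\ov{\th^n(z_0)}\th^n(\ze)}{1-\ov{z_0}\ze};
\]
Aleksandrov-type estimates, valid because $np>1$, give $\|k_{z_0}^{\th^n}\|_p\asymp(1-|z_0|)^{-1/p'}$ and $|k_{z_0}^{\th^n}(z_0)|\asymp(1-|z_0|)^{-1}$, and a subharmonic mean-value for $|fk_{z_0}^{\th^n}|^q$ converts the operator inequality into $|f(z_0)|\le C(1-|z_0|)^\al$. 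For (iv)$\Rightarrow$(iii), the pointwise bound $|g(z)|\le C\|g\|_p(1-|z|)^{-1/p}$ available on $K^p_{\th^n}$ combines with the decay of $|f|$ on $\Omte$ supplied by (iv); the $H^q$-norm of $fg$ is then controlled by splitting the integration according to whether the radial projection meets $\Omte$. The exponent identity $\al=p^{-1}-q^{-1}$ ensures perfect compensation, and careful bookkeeping of these exponents -- crucially contingent on $np>1$ -- is the main technical hurdle of the argument.
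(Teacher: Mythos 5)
Your overall plan (making the sublevel-set condition (iv) the hub) is legitimate, but the individual implications are not established, and several of the proposed steps fail precisely in the regime the theorem is designed for, namely $p\le 1$, i.e.\ $\al=p^{-1}-q^{-1}\ge1$ (this is why the powers $\th^n$ with $np>1$ appear at all). Your (vi)$\Rightarrow$(iv) rests on $|g(z)-g(z/|z|)|=O((1-|z|)^\al)$ for $g\in\Al$; this is true only for $\al<1$ (for $\al>1$ one only gets $O(1-|z|)$, and $O((1-|z|)\log\f1{1-|z|})$ at $\al=1$), so the chain $(1-\eps^n)|f(z)|=O((1-|z|)^\al)$ collapses for $\al\ge1$. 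For (ii)$\Rightarrow$(iv) the scheme is not ``identical'': the harmonic extension of $f\ov\th^n$ is not $f(z)\ov{\th(z)}^n$, so the smallness $|u(z)|\le\eps^n|f(z)|$ on $\Omte$ that your argument needs is simply not available. In (iv)$\Rightarrow$(v) the ``outside $\Om(\th,\eps')$'' case is wrong: for Leibniz terms with $j\le\al$ (already $j=0$, $m=1$, $0<\al<1$, i.e.\ the term $f\th'$) boundedness of $f^{(j)}$ together with $|(\th^k)^{(m-j)}(z)|\lesssim(1-|z|)^{j-m}$ gives only $O((1-|z|)^{j-m})$, and (iv) provides no decay of $f$ where $|\th(z)|\ge\eps'$. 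Moreover, no pointwise Leibniz repair can work for $\al>1$ with a single factor $\th$: Shirokov's example (Remark 2 of the paper) has $f\in\Al$, $f/\th\in\Al$ but $f\th\notin\Al$. This is exactly why the paper routes (iv)$\Rightarrow$(i) through Carleson contours and duality with $H^r$, $r=(1+\al)^{-1}$, and then proves (i)$\Rightarrow$(v) via the identity $\th^{n+1}(\ze)=(\th(\ze)-\th(z))^{n+1}+\sum_k\ph_k(z)\th^k(\ze)$ inserted into the Cauchy formula for $(f\th)^{(n)}$. You also pass from (iv) at level $\eps$ to level $\eps'>\eps$; the equivalence ``some $\eps$ $\iff$ every $\eps$'' is part of what must be proved, not an input.

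The equivalence (iii)$\Leftrightarrow$(iv) is also not secured. For (iii)$\Rightarrow$(iv), your asymptotics $\|k^{\th^n}_{z_0}\|_p\asymp(1-|z_0|)^{-1/p'}$ are false for $p\le1$: the $H^p$-quasinorm of $(1-\ov{z_0}\ze)^{-1}$ stays bounded as $|z_0|\to1$ when $p<1$, so testing on the $K_{\th^n}$-kernel yields only $|f(z_0)|\lesssim(1-|z_0|)^{1-1/q}$, weaker than $(1-|z_0|)^\al$ whenever $p<1$. The paper instead tests on $k_{z_0}^n$, the $n$-th power of the $K_\th$-kernel (which lies in $K^p_{\th^n}$), and it is there that $np>1$ is genuinely used, with the lower bound obtained from the Cauchy formula. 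For (iv)$\Rightarrow$(iii), interior growth bounds $|g(z)|\lesssim\|g\|_p(1-|z|)^{-1/p}$ and $|f(z)|\lesssim(1-|z|)^\al$ on $\Omte$ give at best $|fg(z)|\lesssim\|g\|_p(1-|z|)^{-1/q}$ on $\Omte$, and such a growth estimate does not imply $fg\in H^q$; ``splitting the integration according to whether the radial projection meets $\Omte$'' is not an estimate of the boundary $L^q$-norm. The paper's proof of the corresponding implications is operator-theoretic: write $\ov fg=T_{\ov f}g+H_{\ov f}g$, bound $H_{\ov f}:H^p\to\ov{H^q_0}$ by Duren--Romberg--Shields duality (using $f\in\Al$), and bound the Toeplitz part using $P_-(f\ov{\th^n})\in\La^\al$. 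Some substitute of this kind is indispensable; as written, your argument only covers the classical range $0<\al<1$, $p>1$, $n=1$, and leaves genuine gaps in (ii)$\Rightarrow$(iv), (iv)$\Rightarrow$(v)/(i), and both directions of (iii)$\Leftrightarrow$(iv) for small $p$.
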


\par It should be noted that the set $\Om(\th,\eps)$ hits $\T$ precisely at those points 
which are singular for $\th$. Thus, \eqref{eqn:decaycond} tells us how strongly the good 
factor $f$ must vanish on the bad set of the problematic (nonsmooth) factor $\th$ in order 
that the products in question be appropriately smooth. 

\par Postponing the proof for a while, we first establish a few preliminary facts to lean upon. To begin with, 
we recall the Duren--Romberg--Shields theorem (see \cite{DRS}) which allows us to identify $\Al$ with the dual 
of the Hardy space $H^r$, where $r=(1+\al)^{-1}$, under the pairing 
$$\langle\ph,\psi\rangle=\int_\T\ph\ov\psi\,dm.$$ 
For a given $\psi\in\Al$, the integral above is well defined at least when $\ph\in H^\infty$, and we have 
$$|\langle\ph,\psi\rangle|\le c_\al\|\ph\|_r\|\psi\|_{\La^{\al}}$$
with some constant $c_\al>0$. Moreover, the norm of the functional induced by $\psi$ on $H^r$ is actually 
comparable to $\|\psi\|_{\La^{\al}}$. 
\par The next three lemmas exploit this duality relation. The first of these was established by Havin in 
\cite{H}; we also cite Shamoyan \cite{Sha} in connection with part (b) below. 

\begin{lem}\label{lem:havin} Let $0<\al<\infty$. 
\par{\rm (a)} If $h\in H^\infty$, then the Toeplitz operator $T_{\ov h}$ maps 
the space $\Al$ boundedly into itself, with norm at most $\const\cdot\|h\|_\infty$. 
\par{\rm (b)} If $f\in H^1$ and $\th$ is an inner function such that $f\th\in\Al$, then $f\in\Al$ and 
$\|f\|_{\La^{\al}}\le\const\cdot\|f\th\|_{\La^{\al}}$. 
\par The constants are allowed to depend only on $\al$. 
\end{lem}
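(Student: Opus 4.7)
My plan is to derive both parts from the Duren--Romberg--Shields duality $\Al \cong (H^r)^*$, with $r = (1+\al)^{-1}$, just recalled before the lemma. Under this identification, an analytic function $\psi$ belongs to $\Al$ (with comparable norm) exactly when the pairing $\ph \mapsto \langle \ph,\psi\rangle = \int_\T \ph\ov\psi\,dm$ extends to a bounded functional on $H^r$; by density of $H^\infty$ in $H^r$, it suffices to test on $\ph \in H^\infty$. The heart of the argument is part (a); part (b) will follow from a one-line manipulation.

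For part (a), fix $\psi \in \Al$ and $h \in H^\infty$. First, $T_{\ov h}\psi = P_+(\ov h\psi)$ lies in every $H^p$ with $1<p<\infty$, since $\ov h\psi \in L^\infty$ and $P_+$ is bounded on $L^p$ by M.~Riesz, so it at least represents a linear functional on $H^\infty$ via integration against its conjugate. To bound its $\La^\al$-norm, take any $\ph \in H^\infty$ and use the $L^2$-self-adjointness of $P_+$ together with $P_+\ph = \ph$:
\begin{equation*}
\langle \ph, T_{\ov h}\psi\rangle = \int_\T \ph\,\ov{P_+(\ov h\psi)}\,dm = \int_\T P_+\ph\cdot\ov{\ov h\psi}\,dm = \int_\T (\ph h)\ov\psi\,dm = \langle\ph h,\psi\rangle.
\end{equation*}
Since $\ph h \in H^\infty \subset H^r$ with $\|\ph h\|_r \le \|h\|_\infty\|\ph\|_r$, the duality bound applied to the right-hand side yields $|\langle \ph, T_{\ov h}\psi\rangle| \le c_\al\|h\|_\infty\|\ph\|_r\|\psi\|_{\La^\al}$. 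Taking the supremum over $\ph$ in the unit ball of $H^\infty$ under the $H^r$-norm shows that $T_{\ov h}\psi$ induces a functional on $H^r$ of norm at most $c_\al\|h\|_\infty\|\psi\|_{\La^\al}$, which by the duality places $T_{\ov h}\psi$ in $\Al$ with the claimed norm bound.

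For part (b), the key reduction is the identity $f = T_{\ov\th}(f\th)$, valid for any $f \in H^1$ and any inner $\th$. Indeed, $|\th|=1$ a.e.\ on $\T$, so $\ov\th\cdot f\th = f$ a.e.; and since $f \in H^1$ has vanishing negative Fourier coefficients, $P_+f = f$. Thus $T_{\ov\th}(f\th) = P_+(\ov\th\cdot f\th) = f$. Applying part (a) with $h = \th \in H^\infty$ to the symbol $f\th \in \Al$ now yields $f \in \Al$ with $\|f\|_{\La^\al} \le \const\cdot\|f\th\|_{\La^\al}$.

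The only technical nicety is the self-adjoint swap in part (a), but this is wholly routine: both $\ph$ and $\ov h\psi$ sit in $L^\infty \subset L^2$, so the $L^2$-inner-product manipulation is unambiguous. Apart from that, everything is bookkeeping within the DRS duality, so I do not foresee any real obstacle.
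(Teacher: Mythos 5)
Your proof is correct and follows essentially the same route as the paper: part (a) is the DRS-duality argument identifying $T_{\ov h}$ as the adjoint of multiplication by $h$ on $H^r$ (you merely verify the adjoint relation $\langle\ph,T_{\ov h}\psi\rangle=\langle\ph h,\psi\rangle$ by hand), and part (b) is exactly the paper's reduction $f=T_{\ov\th}(f\th)$. No issues.
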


\par In Havin's terminology, statements (a) and (b) can be rephrased by saying that $\Al$ has the $K$-property 
and the (weaker) $f$-property, respectively. To prove (a), one notes that $T_{\ov h}$ is the adjoint 
of the multiplication operator $T_h:\,g\mapsto gh$, which is obviously bounded on $H^r$ with norm at 
most $\|h\|_\infty$. To deduce (b) from (a), observe that $f=T_{\ov\th}(f\th)$. 

\begin{lem}\label{lem:hankelhphq} Suppose that $0<p<\infty$, $\max(1,p)<q<\infty$, and $\al=p^{-1}-q^{-1}$. 
If $f\in\Al$, then the Hankel operator $H_{\ov f}$, defined by 
$$H_{\ov f}g=P_-(\ov fg),\qquad g\in H^\infty,$$ 
can be extended to a bounded linear operator mapping $H^p$ into $\ov H^q_0$.
\end{lem}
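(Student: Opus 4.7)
The plan is to prove the bound $\|H_{\bar f}g\|_q \le C\|f\|_{\La^\al}\|g\|_p$ for $g$ in the dense subset $H^\infty\subset H^p$, and then extend $H_{\bar f}$ to all of $H^p$ by continuity. The two dualities I would lean on are the Duren--Romberg--Shields identification $(H^r)^* \cong \La^\al$ with $r=(1+\al)^{-1}$, recalled just before the lemma, and the classical Hardy--space duality $(\ov{H^q_0})^* \cong H^{q'}_0$, which is available because $q>1$ (and hence $q'<\infty$) throughout the stated range.

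First I would fix $g \in H^\infty$, so that $H_{\bar f}g = P_-(\bar fg)$ is unambiguously defined (as an element of $\ov{H^2_0}$, say), and estimate $\|H_{\bar f}g\|_q$ by testing against polynomials $v$ with $v(0)=0$ and $\|v\|_{q'}\le 1$ (a dense subclass of the unit ball of $H^{q'}_0$). A key observation is that
\begin{equation*}
\int_\T P_+(\bar fg)\,v\,dm = 0,
\end{equation*}
since $P_+(\bar fg)\cdot v$ belongs to $H^1_0$ (being a product in $H^2\cdot H^\infty$ whose second factor vanishes at the origin) and thus has vanishing mean on $\T$. Consequently
\begin{equation*}
\int_\T H_{\bar f}(g)\,v\,dm = \int_\T \bar f(gv)\,dm.
\end{equation*}

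Next I would combine H\"older's inequality with the Duren--Romberg--Shields estimate. The identity $1/p + 1/q' = 1+\al = 1/r$ (which is just the defining relation for $\al$ rewritten) together with pointwise H\"older on $\T$ yields $\|gv\|_r \le \|g\|_p\|v\|_{q'}$. Since $gv \in H^\infty$, the pairing $\int_\T \bar f (gv)\,dm = \langle gv, f\rangle$ is literally an $L^1$-convergent integral, so the duality bound recalled above gives
\begin{equation*}
\left|\int_\T \bar f\,gv\,dm\right| \le c_\al\|gv\|_r\|f\|_{\La^\al} \le c_\al\|f\|_{\La^\al}\|g\|_p\|v\|_{q'}.
\end{equation*}
Taking the supremum over admissible $v$ produces $\|H_{\bar f}g\|_q \le C\|f\|_{\La^\al}\|g\|_p$ on $H^\infty$, and density of $H^\infty$ in $H^p$ finishes the proof.

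The hard part, such as it is, concerns the case $p\le 1$: there $H^p$ is only a quasi-Banach space and $P_-$ is not bounded on $L^p(\T)$, so the formula $H_{\bar f}g = P_-(\bar fg)$ makes no direct sense for a generic $g\in H^p$. The content of the lemma is precisely that the bound proved on the dense subset $H^\infty$ permits a continuous extension to $H^p$. All other ingredients---pointwise H\"older, the Duren--Romberg--Shields duality, the $L^{q'}$-boundedness of the Riesz projection needed to identify $(\ov{H^q_0})^*$, and density of the polynomials---are standard once $q>1$ is in hand.
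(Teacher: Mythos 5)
Your argument is correct and is essentially the paper's own proof: both test $H_{\ov f}g$ against unit-norm functions in $H^{q'}_0$, drop the analytic part of $\ov fg$ in the pairing, and combine H\"older's inequality (with $1/p+1/q'=1/r$) with the Duren--Romberg--Shields duality, then extend from $H^\infty$ by density. The paper simply compresses the steps you spell out (the vanishing of $\int_\T P_+(\ov fg)\,v\,dm$ and the final density argument).
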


\begin{proof} Put $r=(1+\al)^{-1}$ and $q'=q/(q-1)$. Given $g\in H^\infty$ and $h\in H_0^{q'}$, we have 
\begin{equation*}
\begin{aligned}
\left|\int_\T\left(H_{\ov f}g\right)h\,dm\right|&=\left|\int_\T P_-(\ov fg)\cdot h\,dm\right|
=\left|\int_\T\ov fgh\,dm\right|\\
&\le c_\al\|f\|_{\La^\al}\|gh\|_r\le c_\al\|f\|_{\La^\al}\|g\|_p\|h\|_{q'}.
\end{aligned}
\end{equation*}
Here, the last two inequalities rely on the Duren--Romberg--Shields duality theorem and on H\"older's 
inequality. Taking the supremum over the unit-norm functions $h$ in $H_0^{q'}$, we obtain 
$$\|H_{\ov f}g\|_q\le c_\al\|f\|_{\La^\al}\|g\|_p,$$
which proves the required result. 
\end{proof}

\begin{lem}\label{lem:toepktheta} Suppose that $0<p<\infty$, $\max(1,p)<q<\infty$, and $\al=p^{-1}-q^{-1}$. 
Further, let $f\in H^2$ and let $\th$ be an inner function. If $P_-(f\ov\th)\in\La^\al$, then the operator 
$T_{\ov f}\big|_{K_\th^\infty}$ can be extended to a bounded linear operator acting from $K_\th^p$ to $H^q$. 
\end{lem}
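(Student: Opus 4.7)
The strategy is to peel off the inner factor from $g$, identify what remains of $T_{\ov f}g$ as (the conjugate of) a Hankel operator of the type controlled by Lemma~\ref{lem:hankelhphq}, and absorb a single residual scalar term by Duren--Romberg--Shields duality. Given $g\in K_\th^\infty$, I will set $\psi:=\ov{\ov\th g}$; the inclusion $g\in K_\th$ forces $\ov\th g\in\ov{H_0^2}$, so $\psi\in H_0^2$, and the boundedness of $g$ makes $\psi\in H_0^\infty$ with $\|\psi\|_s=\|g\|_s$ for every $s>0$. Splitting $f\ov\th=P_+(f\ov\th)+P_-(f\ov\th)$ and putting $F:=\ov{P_-(f\ov\th)}$, the hypothesis yields $F\in\La^\al\cap H_0^2=\Al$. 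Conjugating the splitting, dividing by $\th$ and multiplying by $g$ gives
\begin{equation*}
\ov f\,g=\ov{P_+(f\ov\th)}\cdot\ov\psi+F\ov\psi;
\end{equation*}
the first summand is the conjugate of an element of $H^2\cdot H_0^2\sb H_0^1$, hence lies in $\ov{H_0^1}$ and is annihilated by $P_+$. Consequently $T_{\ov f}g=P_+(F\ov\psi)$.

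At this point the elementary identity $P_+\ov u=\ov{P_-u}+\widehat{\ov u}(0)$, applied with $u=\ov F\psi$, produces
\begin{equation*}
T_{\ov f}g=\ov{H_{\ov F}\psi}+\int_\T F\ov\psi\,dm.
\end{equation*}
Since $F\in\Al$, Lemma~\ref{lem:hankelhphq} delivers $\|H_{\ov F}\psi\|_q\le c_\al\|F\|_{\La^\al}\|\psi\|_p$. The scalar term is a duality pairing between $F\in\Al$ and $\psi\in H^r$ with $r:=(1+\al)^{-1}$, which the Duren--Romberg--Shields theorem bounds by $c_\al\|F\|_{\La^\al}\|\psi\|_r$. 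A short arithmetic check using $\al=p^{-1}-q^{-1}$ shows that $q\ge 1$ forces $r\le p$, so $\|\psi\|_r\le\|\psi\|_p$ by Jensen's inequality on $(\T,m)$. Combined with $\|\psi\|_p=\|g\|_p$, both contributions are at most a constant multiple of $\|P_-(f\ov\th)\|_{\La^\al}\|g\|_p$.

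Assembling the two estimates gives $\|T_{\ov f}g\|_q\le C\|P_-(f\ov\th)\|_{\La^\al}\|g\|_p$ for every $g\in K_\th^\infty$, with $C$ depending only on $\al$; the claimed bounded extension to $K_\th^p$ then follows from density of $K_\th^\infty$ in $K_\th^p$ (this is the definition when $0<p<1$, and a standard fact when $1\le p<\infty$, e.g.\ through reproducing kernels at points of $\D$). I expect the most delicate step to be the treatment of the scalar term: it lies outside the Hankel mechanism proper, and it is precisely there that the assumption $q>1$ intervenes, through the inequality $r\le p$ that converts the $\La^\al$-$H^r$ duality into an $L^p$-bound on $\psi$.
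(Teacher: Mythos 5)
Your argument is correct and is essentially the paper's proof: both reduce $T_{\ov f}g$, for $g\in K^\infty_\th$, to the Hankel operator with antianalytic symbol $P_-(f\ov\th)$ acting on the bounded function $\th\ov g$ (which has the same $L^p$-norm as $g$), and then invoke Lemma~\ref{lem:hankelhphq}. The only difference is cosmetic: the paper takes $h:=\ov z\ov g\th$ and uses the identity $\ov{P_+F}=zP_-(\ov z\ov F)$, so the mean-value term that you must estimate separately (via Duren--Romberg--Shields duality together with $r\le p$, i.e.\ $q\ge1$) never appears.
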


\begin{proof} Given $g\in K_\th^\infty$, put $h:=\bar z\bar g\th$ (so that $h\in H^\infty$) and $\psi:=P_-(f\ov\th)$. 
The elementary identity 
$$\ov{P_+F}=zP_-(\ov z\ov F),\qquad F\in L^2,$$
shows that $\ov{T_{\ov f}g}=zH_\psi h$. Using Lemma \ref{lem:hankelhphq}, we get 
$$\|T_{\ov f}g\|_q=\|H_\psi h\|_q\le\const\cdot\|\psi\|_{\La^\al}\|h\|_p=\const\cdot\|\psi\|_{\La^\al}\|g\|_p,$$
which completes the proof.
\end{proof}

\par As a final preliminary result, we list some facts about the so-called Carleson curves associated with an 
inner function; see \cite[Chapter VIII]{G} for a proof. 

\begin{lem}\label{lem:carleson} Given an inner function $\th$ and a number $\eps\in(0,1)$, there exists a countable 
(possibly finite) system $\Ga_\eps=\Ga_\eps(\th)$ of simple closed rectifiable curves in $\D\cup\T$ with the 
following properties. 
\par{\rm (a)} The interiors of the curves in $\Ga_\eps$ are pairwise disjoint; the intersection of each of these 
curves with the circle $\T$ has zero length. 
\par{\rm (b)} One has $\eta<|\th|<\eps$ on $\Ga_\eps\cap\D$ for some positive $\eta=\eta(\eps)$. 
\par{\rm (c)} The arclength $|dz|$ on $\Ga_\eps\cap\D$ is a Carleson measure, i.e., $H^1\subset L^1(\Ga_\eps,|dz|)$; 
moreover, the norm of the corresponding embedding operator is bounded by a constant $N(\eps)$ depending only 
on $\eps$. 
\par{\rm (d)} For every $F\in H^1$, the equality 
$$\int_\T\f F\th dz =\int_{\Ga_\eps}\f F\th dz$$
holds true, provided that the curves in the family $\Ga_\eps$ are oriented appropriately. 
\end{lem}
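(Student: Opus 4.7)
The plan is to obtain $\Ga_\eps$ as the family of boundary curves (inside $\ov\D$) of a regularized sub-level set of $|\th|$, and then to read off the four properties in sequence. Applying Sard's theorem to the real-analytic function $|\th|^2$ on $\D\setminus\th^{-1}(0)$, I would select $t\in(0,\eps)$ with $t^2$ a regular value; then $\{|\th|=t\}\sb\D$ is a disjoint union of real-analytic arcs on which $\nabla|\th|^2$ does not vanish. Setting $U:=\Om(\th,t)$ and letting $\{U_j\}$ be its connected components, I would take $\Ga_\eps$ to be the collection of all connected components of the boundaries $\partial U_j$, each regarded as a simple closed curve in $\D\cup\T$ oriented as a boundary component of the corresponding $U_j$. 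The trace of any such curve on $\T$ lies in the closed singular set of $\th$, which has zero Lebesgue measure because $|\th|=1$ a.e.\ on $\T$. Properties (a) and (b) are then immediate, the latter with $\eta:=t/2$.

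The main substantive step is (c). By the standard Carleson-box criterion, it suffices to prove
$$\text{length}\bigl(\{|\th|=t\}\cap S(I)\bigr)\le N(\eps)\,|I|$$
for every arc $I\sb\T$ with corresponding Carleson square $S(I)$. The Schwarz--Pick bound $|\th'(z)|(1-|z|^2)\le 1$ controls the fluctuation of $|\th|^2$, and together with a Whitney-type dyadic decomposition of $S(I)$ into boxes at Euclidean distance $\sim 2^{-k}|I|$ from $\T$ it shows, via the implicit function theorem applied to $|\th|^2-t^2$, that the portion of the level curve in each shell has length at most $C\cdot 2^{-k}|I|$ with $C$ depending only on $t$; summation of the resulting geometric series yields the required linear bound. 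In particular the arclength is locally finite in $\D$ and each component of $\Ga_\eps$ is rectifiable. This is the main obstacle: the remaining properties are essentially formal once the length bound is in hand.

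Finally, property (d) follows by Cauchy's theorem. For $F\in H^1$, first replace $F$ by its dilate $F_r(z):=F(rz)$, $r<1$, which is analytic on $\ov\D$, so that $F_r/\th$ is holomorphic on $\D\setminus\ov U$ and bounded there in modulus by $\|F_r\|_\infty/t$. Applying Cauchy's theorem to each connected component of $\D\setminus\ov U$, oriented so that the region lies to the left of each $\partial U_j$, yields the desired identity for $F_r$; the $\T$-portion of each $\partial U_j$ has zero length and so contributes nothing to the contour integral. Passing $r\to1^-$ and using the Carleson bound (c) to control the convergence of $F_r$ to $F$ in $L^1(\Ga_\eps,|dz|)$ yields the equality for $F$.
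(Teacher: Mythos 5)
The paper does not prove this lemma at all; it cites Garnett, Chapter VIII, where $\Ga_\eps$ is produced by Carleson's corona construction: one covers $\Om(\th,\eps)$ by a family of dyadic squares selected through a stopping-time argument, and $\Ga_\eps$ is assembled from the boundaries of the resulting unions of squares. Your proposal instead takes $\Ga_\eps$ to be (essentially) the literal level set $\{|\th|=t\}$, and this is where it breaks down. The decisive step, property (c), is not established by your argument: the Schwarz--Pick inequality gives an \emph{upper} bound on $|\nabla(|\th|^2)|$, and an upper gradient bound can never bound the length of a level set from above (the implicit function theorem needs a \emph{lower} bound on the gradient even to parametrize the set locally, and yields no length estimate without one). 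Quantitatively, a Whitney box of side $2^{-k}|I|$ may contain arbitrarily many zeros of $\th$ --- only the global Blaschke sum is controlled --- so $|\th|$ can cross the value $t$ along many disjoint arcs inside a single box, and your claimed bound $C\cdot2^{-k}|I|$ per shell has no justification. This is exactly the difficulty that forces the corona construction: the selected squares satisfy a Carleson packing condition $\sum\ell(Q)\le N(\eps)\,|I|$ by construction, and since $\Ga_\eps$ consists of sides of these squares, (c) follows; no such packing information is available for the true level set.

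Property (a) also fails for your choice of curves. You assert that $\ov{\Om(\th,t)}\cap\T$ lies in the singular set of $\th$ and hence has zero length; but the singular set of an inner function can have positive, even full, measure. For instance, take a Blaschke product whose zeros accumulate at every point of $\T$: the sublevel set contains a pseudohyperbolic neighbourhood of each zero, so $\ov{\Om(\th,t)}\supset\T$. The a.e.\ existence of unimodular nontangential limits rules out nontangential approach of $\Om(\th,t)$ to a.e.\ boundary point, but not tangential approach. For the same reason the components of $\partial U_j$ need not be simple closed rectifiable curves at all. Your outline of (d) via dilation and Cauchy's theorem is the standard and correct way to finish \emph{once} (a)--(c) hold for a genuinely rectifiable contour, but the construction itself must be replaced by the dyadic one.
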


\par Now we are in a position to prove our main result in this section. 

\medskip\noindent{\it Proof of Theorem \ref{thm:factlipzyg}.} The implications (i)$\implies$(ii) and (v)$\implies$(vi) 
being obvious, our plan is to show that (ii)$\implies$(iii)$\implies$(iv)$\implies$(i)$\implies$(v) and also 
that (vi)$\implies$(iii). 

\par (ii)$\implies$(iii). Write $u:=\th^n$ and let $g\in K^\infty_u$. Note that 
\begin{equation}\label{eqn:toha}
\ov fg=T_{\ov f}g+H_{\ov f}g.
\end{equation}
Since $f\in\Al$, Lemma \ref{lem:hankelhphq} tells us that 
$$\|H_{\ov f}g\|_q\le c_\al\|f\|_{\La^\al}\|g\|_p.$$
Now, since $f\ov u\in\La^\al$ by (ii), it follows that $P_-(f\ov u)\in\La^\al$ (indeed, the operators $P_+$ and $P_-$ 
are known to map $\La^\al$ into itself), and Lemma \ref{lem:toepktheta} gives 
$$\|T_{\ov f}g\|_q\le c_\al\|P_-(f\ov u)\|_{\La^\al}\|g\|_p.$$
The last two inequalities, together with \eqref{eqn:toha}, imply 
$$\|\ov fg\|_q\le\const\cdot\|g\|_p,$$ 
where the constant does not depend on $g$. Obviously, 
$$\|T_fg\|_q=\|fg\|_q=\|\ov fg\|_q,$$
and since $K^\infty_u$ is dense in $K^p_u$, we conclude that $T_f$ is a bounded operator from $K^p_u$ to $H^q$. 

\par (iii)$\implies$(iv). Fix $z\in\D$ and consider the reproducing kernel $k_z$ (for $K^2_\th$), given by 
$$k_z(\ze)=\f{1-\ov{\th(z)}\th(\ze)}{1-\ov z\ze}.$$
Since $k_z\in K^\infty_\th$, it follows that $k^n_z\in K^\infty_{\th^n}(\subset K^p_{\th^n})$; indeed, 
$$k^n_z\ov{\th^n}=\left(k_z\ov\th\right)^n\in\ov{H_0^\infty}.$$
Therefore, by (iii), 
\begin{equation}\label{eqn:kerpq}
\|fk^n_z\|_q\le\const\cdot\|k^n_z\|_p.
\end{equation}
In order to derive further information from this inequality, we now estimate its right-hand side from above, and 
the left-hand side from below. The elementary estimate 
$$\int_\T\f{dm(\ze)}{|\ze-z|^\ga}\le\f{C_\ga}{(1-|z|)^{\ga-1}}\qquad(\ga>1)$$
shows that 
\begin{equation}\label{eqn:rightabove}
\begin{aligned}
\|k^n_z\|_p&=\left(\int_\T\left|\f{1-\ov{\th(z)}\th(\ze)}{1-\ov z\ze}\right|^{np}dm(\ze)\right)^{1/p}\\
&\le2^n\left(\int_\T\f{dm(\ze)}{|\ze-z|^{np}}\right)^{1/p}\le\f{\const}{(1-|z|)^{n-1/p}},
\end{aligned}
\end{equation}
since $np>1$. 
\par Now let $F$ stand for the outer factor of $f$. Using the Cauchy integral formula, we get 
\begin{equation}\label{eqn:leftbelow}
\begin{aligned}
\|fk^n_z\|_q&=
\left(\int_\T|F(\ze)|^q\left|\f{1-\ov{\th(z)}\th(\ze)}{1-\ov z\ze}\right|^{nq}dm(\ze)\right)^{1/q}\\
&\ge\left|\int_\T F^q(\ze)\f{(1-\ov{\th(z)}\th(\ze))^{nq}}{(1-\ov z\ze)^{nq-1}}\f{dm(\ze)}{1-z\ov\ze}\right|^{1/q}\\
&=\left(|F(z)|^q\f{(1-|\th(z)|^2)^{nq}}{(1-|z|^2)^{nq-1}}\right)^{1/q}
=|F(z)|\f{(1-|\th(z)|^2)^n}{(1-|z|^2)^{n-1/q}}\\
&\ge\const\cdot|f(z)|\f{(1-|\th(z)|)^n}{(1-|z|)^{n-1/q}}.
\end{aligned}
\end{equation}
In view of \eqref{eqn:rightabove} and \eqref{eqn:leftbelow}, inequality \eqref{eqn:kerpq} now yields 
$$|f(z)|\cdot(1-|\th(z)|)^n\le\const\cdot(1-|z|)^{1/p-1/q}=\const\cdot(1-|z|)^\al,$$
the constant being independent of $z$. Hence, for $0<\eps<1$, we have 
$$|f(z)|\le\const\cdot(1-\eps)^{-n}(1-|z|)^\al$$
whenever $z\in\Om(\th,\eps)$, so that (iv) holds true. 

\par (iv)$\implies$(i). We begin by showing that if (iv) is fulfilled with some $\eps\in(0,1)$, 
then $f\ov\th\in\La^{\al}$. Since 
$$f\ov\th=T_{\ov\th}f+H_{\ov\th}f$$
and $T_{\ov\th}f\in\Al$ (recall Lemma \ref{lem:havin}), it suffices to check that $H_{\ov\th}f\in\La^{\al}$. To 
this end, we take an arbitrary function $g\in H_0^\infty$ with $\|g\|_r=1$, where $r=(1+\al)^{-1}$, and verify 
that the integrals $\int_\T(H_{\ov\th}f)g\,dm$ are bounded in modulus by a constant independent of $g$. This 
will mean that the function $zH_{\ov\th}f$ generates a continuous linear functional on $H^r$, and hence lies 
in $\ov{\Al}$. Writing $g_1:=g/z$ and using the Carleson curves $\Ga_\eps=\Ga_\eps(\th)$ as described in 
Lemma \ref{lem:carleson}, we obtain 
\begin{equation*}
\begin{aligned}
\left|\int_\T(H_{\ov\th}f)g\,dm\right|&=\left|\int_\T f\ov\th g\,dm\right|=
\left|\f1{2\pi i}\int_\T\f{fg_1}\th\,dz\right|\\
&=\left|\f1{2\pi i}\int_{\Ga_\eps}\f{fg_1}\th\,dz\right|
\le\f1{2\pi}\int_{\Ga_\eps}\f{|f||g_1|^{1-r}|g_1|^r}{|\th|}|dz|.
\end{aligned}
\end{equation*}
Because $g_1$ is a unit-norm function in $H^r$, it follows easily that $|g_1(z)|^r\le(1-|z|)^{-1}$, whence 
$$|g_1(z)|^{1-r}\le(1-|z|)^{-(1-r)/r}=(1-|z|)^{-\al},\qquad z\in\D.$$ 
Plugging this into the preceding estimate and recalling that $|\th|\ge\eta(\eps)$ on $\Ga_\eps\cap\D$, we find 
that 
\begin{equation}\label{eqn:estintcar}
\left|\int_\T(H_{\ov\th}f)g\,dm\right|\le\f1{2\pi\eta(\eps)}\cdot
\left(\sup_{z\in\Ga_\eps\cap\D}\f{|f(z)|}{(1-|z|)^\al}\right)\cdot\int_{\Ga_\eps}|g_1|^r\,|dz|.
\end{equation}
Since $\Ga_\eps\cap\D$ is contained in $\Om(\th,\eps)$, the supremum in \eqref{eqn:estintcar} is finite 
by virtue of (iv). Also, 
$$\int_{\Ga_\eps}|g_1|^r\,|dz|\le N(\eps)\cdot\int_\T|g_1|^r\,dm=N(\eps).$$ 
Taking this into account, we deduce from \eqref{eqn:estintcar} that 
$$\sup\left\{\left|\int_\T(H_{\ov\th}f)g\,dm\right|:\,g\in H_0^\infty,\,\|g\|_r=1\right\}
\le\f{CN(\eps)}{2\pi\eta(\eps)},$$
where $C$ is the constant coming from the $O$-condition in (iv). This means that $H_{\ov\th}f\in\La^\al$, and 
hence $f\ov\th\in\La^\al$. 
\par Replacing $\th$ by $\th^k$ and $\eps$ by $\eps^k$ in the above argument, we similarly verify that 
$f\ov{\th^k}\in\La^\al$ for every $k\in\N$. 

\par (i)$\implies$(v). Assuming (i), we prove first that $f\th\in A^\al$, or equivalently, that 
\begin{equation}\label{eqn:harlitprod}
(f\th)^{(n)}(z)=O((1-|z|)^{\al-n})\quad\text{\rm as}\quad|z|\to1^-. 
\end{equation}
For $z\in\D$ and almost all $\ze\in\T$, we have the elementary identity 
$$\th^{n+1}(\ze)=(\th(\ze)-\th(z))^{n+1}+\sum_{k=0}^n\ph_k(z)\th^k(\ze),$$
where 
$$\ph_k(z):=(-1)^{n-k}\binom{n+1}{k}\th^{n+1-k}(z).$$
Therefore, 
\begin{equation*}
\begin{aligned}
(f\th)^{(n)}(z)&=\f{n!}{2\pi i}\int_\T\f{f(\ze)\th(\ze)}{(\ze-z)^{n+1}}d\ze
=\f{n!}{2\pi i}\int_\T\f{f(\ze)\ov{\th^n(\ze)}\th^{n+1}(\ze)}{(\ze-z)^{n+1}}d\ze\\
&=\f{n!}{2\pi i}\int_\T(f\ov\th^n)(\ze)\left(\f{\th(\ze)-\th(z)}{\ze-z}\right)^{n+1}d\ze
+\f{n!}{2\pi i}\sum_{k=0}^n\ph_k(z)\int_\T\f{f(\ze)\ov\th^{n-k}(\ze)}{(\ze-z)^{n+1}}d\ze\\
&=\f{n!}{2\pi i}\int_\T(f\ov\th^n)(\ze)\cdot\Phi_z(\ze)d\ze
+\sum_{k=0}^n\ph_k(z)\cdot\left(T_{\ov\th^{n-k}}f\right)^{(n)}(z),
\end{aligned}
\end{equation*}
where 
$$\Phi_z(\ze):=\left(\f{\th(\ze)-\th(z)}{\ze-z}\right)^{n+1}.$$
\par In view of (i), $f\ov\th^{n-k}\in\La^\al$ for $k=0,\dots,n$, so that $T_{\ov\th^{n-k}}f\in\Al$, which 
implies that 
$$\left(T_{\ov\th^{n-k}}f\right)^{(n)}(z)=O((1-|z|)^{\al-n})\quad\text{\rm as}\quad|z|\to1^-.$$
The functions $\ph_k(z)$ are bounded in $\D$, and to prove \eqref{eqn:harlitprod} it remains to verify that 
\begin{equation}\label{eqn:remains}
\int_\T(f\ov\th^n)(\ze)\cdot\Phi_z(\ze)\f{d\ze}{2\pi i}=O((1-|z|)^{\al-n})\quad\text{\rm as}\quad|z|\to1^-.
\end{equation}
Denote the integral on the left-hand side by $I_n(z)$. Since $\Phi_z\in H^\infty$, it follows that 
$$|I_n(z)|=\left|\int_\T(f\ov\th^n)(\ze)\cdot\ze\Phi_z(\ze)\,dm(\ze)\right|
\le c_\al\|P_-(f\ov\th^n)\|_{\La^\al}\|\Phi_z\|_r;$$
here, as before, $r=(1+\al)^{-1}$. Because $n>\al$, we have $(n+1)r>1$ and 
$$\|\Phi_z\|_r\le2^{n+1}\left(\int_\T\f{dm(\ze)}{|\ze-z|^{(n+1)r}}\right)^{1/r}
\le\f{\const}{(1-|z|)^{n+1-1/r}}=\f{\const}{(1-|z|)^{n-\al}},$$
where the constant does not depend on $z$. Consequently, 
$$|I_n(z)|\le\const\cdot\|P_-(f\ov\th^n)\|_{\La^\al}(1-|z|)^{\al-n}.$$
Since 
$$\|P_-(f\ov\th^n)\|_{\La^\al}\le C_\al\|f\ov\th^n\|_{\La^\al}<\infty$$
by virtue of (i), the estimate \eqref{eqn:remains} is thereby established. 
\par Thus, we have proved the implication 
$$(f\in\Al)\,\,\&\,\,\text{\rm(i)}\implies f\th\in\Al.$$ 
Applying this inductively to $f\th$, $f\th^2$, etc., in place of $f$, we eventually deduce from (i) that 
$f\th^k\in\Al$ for each $k\in\N$. 

\par (vi)$\implies$(iii). Write $u:=\th^n$ and suppose that $g\in K_u^\infty$. Then $g\ov u\in\ov{H_0^\infty}$, 
and hence 
$$\ov f\ov ug=P_-(\ov f\ov ug)=H_{\ov f\ov u}g.$$
Therefore, 
\begin{equation}\label{eqn:piglet}
\|fg\|_q=\|\ov f\ov ug\|_q=\|H_{\ov f\ov u}g\|_q\le c_\al\|fu\|_{\La^\al}\|g\|_p,
\end{equation}
where the last inequality is due to Lemma \ref{lem:hankelhphq}. The quantity $\|fu\|_{\La^\al}$ is finite in view 
of (vi), and \eqref{eqn:piglet} tells us that 
$$\|fg\|_q\le\const\cdot\|g\|_p$$
with a constant independent of $g$. Thus, the multiplication operator $T_f:g\mapsto fg$ maps $K^p_u$ boundedly 
into $H^q$, as required.\quad\qed

\medskip If we wish to restrict ourselves to the issue of multiplying or dividing a function $f\in\Al$ by 
an inner function $\th$ (and its powers), leaving out the model subspace part, we may state the result in 
a more concise form as follows. 

\begin{prop}\label{prop:concise} Suppose that $0<\al<\infty$, $n\in\N$, and $n>\al$. Given $f\in\Al$ and an 
inner function $\th$, the four statements below are equivalent. 
\par{\rm (i)} $f\th^n\in\Al$.
\par{\rm (ii)} $f\ov\th^n\in\La^{\al}$.
\par{\rm (iii)} $f\th^k\in\La^{\al}$ for all $k\in\Z$.
\par{\rm (iv)} Condition \eqref{eqn:decaycond} holds for some (or every) $\eps\in(0,1)$. 
\end{prop}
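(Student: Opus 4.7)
The plan is to deduce Proposition \ref{prop:concise} directly from Theorem \ref{thm:factlipzyg} by producing suitable auxiliary exponents $p, q$. Given $\al > 0$ and an integer $n > \al$, I would choose any $p$ in the open interval $\left(\max\{1/n,\,1/(1+\al)\},\,1/\al\right)$; this interval is nonempty because $n > \al$ forces $1/n < 1/\al$ and $\al > 0$ forces $1/(1+\al) < 1/\al$. Setting $q := (1/p - \al)^{-1}$, the identity $\al = 1/p - 1/q$ is built in; the bound $p > 1/n$ secures $np > 1$; the bound $p > 1/(1+\al)$ secures $q > 1$; and $q > p$ is automatic from $\al > 0$. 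Altogether $(p, q)$ satisfies every hypothesis of Theorem \ref{thm:factlipzyg}.

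With these exponents fixed, statements (i), (ii), (iv) of the proposition coincide verbatim with statements (vi), (ii), (iv) of the theorem, so their mutual equivalence is immediate. It remains to weave (iii) into the chain. The implication (iii)$\Rightarrow$(ii) is trivial: take $k = -n$ and use $\th^{-1} = \ov\th$ a.e.\ on $\T$. Conversely, from any of (i), (ii), or (iv), Theorem \ref{thm:factlipzyg} delivers both of its conditions (v) and (i), namely $f\th^k \in \Al$ for every $k \in \N$ and $f\ov\th^k \in \La^\al$ for every $k \in \N$. Combined with the embedding $\Al \subset \La^\al$ (to interpret positive exponents as boundary-smooth functions in $\La^\al$), and with the trivial $k = 0$ case, these two statements together cover all integer exponents and yield (iii).

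The only real obstacle is the range selection for $p$, which is elementary arithmetic; no analytic machinery beyond Theorem \ref{thm:factlipzyg} is required. In effect, the proposition is a repackaging statement confirming that the equivalences of the theorem depend only on the data $(f, \th, \al, n)$ and not on the auxiliary $p, q$ used in the proof.
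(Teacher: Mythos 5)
Your proposal is correct and follows essentially the same route as the paper, which proves Proposition \ref{prop:concise} precisely by choosing exponents $p,q$ so that the hypotheses of Theorem \ref{thm:factlipzyg} hold and then invoking that theorem; your explicit choice $p\in\left(\max\{1/n,1/(1+\al)\},1/\al\right)$, $q=(1/p-\al)^{-1}$ is a valid instance, and your handling of condition (iii) via the theorem's conditions (i) and (v) together with $\Al\subset\La^\al$ and the boundary identification $\th^{-k}=\ov\th^{\,k}$ a.e.\ on $\T$ is exactly the intended bookkeeping.
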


\par To prove this, it suffices to choose exponents $p$ and $q$ (once $\al$ and $n$ are given) so as to make 
the hypotheses of Theorem \ref{thm:factlipzyg} true, and then invoke the theorem. 

\medskip\noindent{\it Remarks.} (1) An alternative route to Proposition \ref{prop:concise} (but not to Theorem 
\ref{thm:factlipzyg} in its entirety) via the pseudoanalytic extension method was found by Dyn'kin \cite{Dyn}. 
A similar technique was later used by the author in \cite{DActa} to completely characterize the functions 
in $\Al$, $0<\al<1$, and in more general Lipschitz-type spaces, in terms of their moduli. (In particular, some 
equivalent forms of the crucial condition \eqref{eqn:decaycond} came out as a corollary.) Subsequently, 
Pavlovi\'c \cite{P} gave a more elementary proof of that result from \cite{DActa}. 

\smallskip (2) Some of the conditions in Theorem \ref{thm:factlipzyg} and Proposition \ref{prop:concise} would 
become simpler if we could take $n=1$. This can be done if $1<p<\infty$ in Theorem \ref{thm:factlipzyg}, or 
if $0<\al<1$ in Proposition \ref{prop:concise}, but not in the general case. Indeed, it follows from Shirokov's 
work (see \cite{ShiLNM, Shi}) that for each $\al>1$, one can find $f\in\Al$ and a Blaschke product $\th$ such 
that $f/\th\in\Al$, but $f\th\not\in\Al$. This means, in particular, that conditions (i) and (ii) in Proposition 
\ref{prop:concise} are no longer equivalent when $\al>1$ and $n=1$. The equivalence does hold under certain 
additional assumptions, though; these are likewise discussed in \cite{ShiLNM, Shi}. See also \cite{DScand, DSpb10} 
for an alternative study of this phenomenon. 

\smallskip (3) Given $\al\in(0,\infty)\setminus\Z$, suppose that $f\in\Al$ and $\th$ is an inner function. Comparing 
our Proposition \ref{prop:concise} with Shirokov's earlier results (see \cite{ShiTr, ShiLNM, Shi}), one infers 
that condition \eqref{eqn:decaycond} holds if and only if 
\begin{equation}\label{eqn:shircond}
m(\si(\th))=0\qquad\&\qquad|f(\ze)|=O\left(\f1{|\th'(\ze)|^\al}\right)\,\,\text{\rm for }\ze\in\T\setminus\si(\th),
\end{equation}
where $\si(\th)$ is the set of boundary singularities for $\th$. The equivalence between \eqref{eqn:decaycond} 
and \eqref{eqn:shircond} was also verified directly in \cite[Section 2]{DSpb93}. 

\smallskip (4) Theorem \ref{thm:factlipzyg} and Proposition \ref{prop:concise} remain valid in the case $\al=0$ 
(with $n=1$ and $1<p=q<\infty$), provided that the spaces $\Lambda^0$ and $A^0$ are taken to be $\bmo$ and $\bmoa$, 
respectively. This convention might be justified by the duality relations $\Al=(H^{1/(1+\al)})^*$ and 
$\bmoa=(H^1)^*$. The $\bmo$ versions of the above results are discussed in more detail in \cite[Section 5]{DSpb93}. 

\smallskip (5) In \cite{DSpb10}, we also considered the algebra $H^\infty_n:=\{f:\,f^{(n)}\in H^\infty\}$, $n\in\N$, 
in place of $\Al$, and we came up with an analogue of Proposition \ref{prop:concise} in that context.

\section{Factorization in Dirichlet-type spaces}

For a sequence $w=\{w_k\}_{k=1}^\infty$ of nonnegative numbers, the corresponding {\it Dirichlet-type space} 
$\mathcal D_w$ is formed by those functions $f\in H^2$ for which the quantity 
\begin{equation}\label{eqn:defdirnorm}
\|f\|_w:=\left(\sum_{k=1}^\infty w_k|\widehat f(k)|^2\right)^{1/2}
\end{equation}
is finite. The case $w_k=k$ corresponds to the classical {\it Dirichlet space} $\mathcal D(=\mathcal D_{\{k\}})$, 
the set of all functions $f\in H^2$ with 
$$\|f\|_\mathcal D:=\left(\int_\D|f'(z)|^2dA(z)\right)^{1/2}<\infty$$ 
(here $A$ is the normalized area measure on $\D$), and we have $\|\cdot\|_\mathcal D=\|\cdot\|_{\{k\}}$. 
\par We begin by establishing a certain orthogonality relation involving Toeplitz operators on Dirichlet-type 
spaces. 

\begin{thm}\label{thm:orthdir} Given numbers $0\le w_1\le w_2\le\dots$, let $w=\{w_k\}_{k=1}^\infty$ and let 
$\ga=\{\ga_k\}_{k=1}^\infty$ be the sequence defined by 
\begin{equation}\label{eqn:defgamma}
\ga_1=w_1,\qquad\ga_k=w_k-w_{k-1}\quad(k=2,3,\dots).
\end{equation}
Suppose that $F\in H^2$, $\th$ is an inner function, and $\{g_n\}$ is an orthonormal basis in $K_\th$. 
If $\Phi:=zT_{\ov z\ov\th}F$ and $h_n:=zT_{\ov\th}(Fg_n)$, then 
\begin{equation}\label{eqn:orthrel}
\|F\|_w^2=\|\Phi\|_w^2+\sum_n\|h_n\|_\ga^2
\end{equation}
(the definition of $\|\cdot\|_\ga$ being similar to \eqref{eqn:defdirnorm} above).
\end{thm}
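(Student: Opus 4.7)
My plan is to prove (\ref{eqn:orthrel}) by combining Abel summation with the orthogonal decomposition $H^2=\theta H^2\oplus K_\theta$ applied to iterated backward shifts of $F$. Write $S^\ast:=T_{\bar z}$ for the backward shift. Because $w_k=\sum_{j=1}^k\gamma_j$, Abel summation yields, for every $G\in H^2$,
$$\|G\|_w^2=\sum_{j\ge1}\gamma_j\|(S^\ast)^jG\|_2^2.$$
Combined with the Pythagorean identity $\|(S^\ast)^jF\|_2^2=\|P_{\theta H^2}(S^\ast)^jF\|_2^2+\|P_{K_\theta}(S^\ast)^jF\|_2^2$, this reduces (\ref{eqn:orthrel}) to the two matchings
$$\sum_{j\ge1}\gamma_j\|P_{\theta H^2}(S^\ast)^jF\|_2^2=\|\Phi\|_w^2\qquad\text{and}\qquad\sum_{j\ge1}\gamma_j\|P_{K_\theta}(S^\ast)^jF\|_2^2=\sum_n\|h_n\|_\gamma^2.$$

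For the first matching, set $\tilde F:=T_{\bar\theta}F$. The product rule $T_{\bar a}T_{\bar b}=T_{\overline{ab}}$ for coanalytic Toeplitz operators gives $T_{\bar\theta}(S^\ast)^jF=(S^\ast)^j\tilde F$; and from the splitting $G=\theta T_{\bar\theta}G+P_{K_\theta}G$, together with $\bar\theta P_{K_\theta}G\in\overline{H^2_0}$, one reads off $\|T_{\bar\theta}G\|_2=\|P_{\theta H^2}G\|_2$ for every $G\in H^2$. Consequently,
$$\sum_{j\ge1}\gamma_j\|P_{\theta H^2}(S^\ast)^jF\|_2^2=\sum_{j\ge1}\gamma_j\|(S^\ast)^j\tilde F\|_2^2=\|\tilde F\|_w^2.$$
A direct computation of $P_+(\bar z\bar\theta F)$, using $\bar\theta F=\tilde F+\bar\theta P_{K_\theta}F$ with the second summand in $\overline{H^2_0}$, gives $T_{\bar z\bar\theta}F=S^\ast\tilde F$, whence $\Phi=\tilde F-\tilde F(0)$. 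Since $w_0=0$, this yields $\|\Phi\|_w^2=\|\tilde F\|_w^2$, completing the first matching.

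For the second matching, fix $j\ge1$ and write each $g_n$ on $\T$ as $g_n=\theta\overline{G_n}$ with $G_n\in H^2_0$, which is possible because $K_\theta=H^2\cap\theta\overline{H^2_0}$. A short calculation gives $h_n=zP_+(F\overline{G_n})$ and
$$\widehat{h_n}(j)=\int_\T F\,\overline{G_n}\,\bar\zeta^{\,j-1}\,dm=\langle(S^\ast)^{j-1}F,G_n\rangle_{H^2}.$$
The crucial structural fact is that $\{G_n\}$ forms an orthonormal basis of the shifted model space $zK_\theta$: orthonormality follows from $|G_n|=|g_n|$ a.e.\ on $\T$, and writing $G_n=z\cdot(\bar z\theta\overline{g_n})=zCg_n$, where $Cg:=\bar z\theta\bar g$ is the standard antilinear involution of $K_\theta$, identifies the span as $zK_\theta$. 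Combining Bessel's identity with the decomposition $(S^\ast)^{j-1}F=\widehat F(j-1)+z(S^\ast)^jF$ (whose constant part is orthogonal to $zK_\theta$) and the isometric identification $K_\theta\simeq zK_\theta$ via multiplication by $z$, one obtains
$$\sum_n|\widehat{h_n}(j)|^2=\|P_{zK_\theta}(z(S^\ast)^jF)\|_2^2=\|P_{K_\theta}(S^\ast)^jF\|_2^2.$$
Summing against $\gamma_j$ supplies the second matching.

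The main technical step is the identification of $\{G_n\}$ as an orthonormal basis of $zK_\theta$, which is where the antilinear involution of the model space plays a role; the rest is essentially bookkeeping built on Abel summation, the product rule for coanalytic Toeplitz operators, and the Pythagorean splitting coming from $H^2=\theta H^2\oplus K_\theta$.
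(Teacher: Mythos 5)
Your argument is correct, and it reaches \eqref{eqn:orthrel} by a route that is organized differently from the paper's. The paper's proof hinges on Lemma \ref{lem:multhankel}: the identity $\|F\|_w=\|M_\ga H_{\ov F}\|_{\mathfrak S_2}$ realizes the Dirichlet-type norm as the Hilbert--Schmidt norm of a weighted Hankel operator, and \eqref{eqn:orthrel} then drops out by computing that norm in the second orthonormal basis $\{\th z^n\}_{n\ge0}\cup\{\bar z\bar g_n\th\}$ of $H^2=\th H^2\oplus K_\th$, using the basis-independence of $\|\cdot\|_{\mathfrak S_2}$. You instead expand $\|\cdot\|_w^2$ by Abel summation into $\sum_j\ga_j\|(T_{\bar z})^jF\|_2^2$, split each backward-shift iterate by Pythagoras along $H^2=\th H^2\oplus K_\th$, and identify the two resulting sums with $\|\Phi\|_w^2$ and $\sum_n\|h_n\|_\ga^2$, the latter via Parseval against the system $G_n=\th\,\ov{g_n}=z\,(\bar z\th\ov{g_n})$, i.e. $z$ times the conjugated basis of $K_\th$. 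In substance the same double sum is being rearranged in both proofs (your Abel-summation identity is exactly the content of Lemma \ref{lem:multhankel} read in the basis $\{z^n\}$, and your per-$j$ Pythagoras plus Parseval replaces the change of orthonormal basis for the Hilbert--Schmidt norm), but your version is more elementary and self-contained: it never mentions Hilbert--Schmidt operators, and all rearrangements are licensed by Tonelli since every term is nonnegative, so the identity holds in $[0,\infty]$ without any a priori finiteness assumption. What the paper's packaging buys is the operator-theoretic viewpoint (the weighted Hankel operator $M_\ga H_{\ov F}$), which is the lemma reused conceptually elsewhere; what yours buys is transparency of the bookkeeping. Two small points to tighten: the parenthetical claim that orthonormality of $\{G_n\}$ ``follows from $|G_n|=|g_n|$ a.e.'' only gives the normalization, not mutual orthogonality --- the correct justification is the one you also give, namely that $Cg:=\bar z\th\bar g$ is an antilinear isometric involution of $K_\th$, so $\{Cg_n\}$ is again an orthonormal basis of $K_\th$ (completeness included, since $C$ is onto) and hence $\{zCg_n\}$ is one for $zK_\th$; and the phrase ``since $w_0=0$'' should simply read that $\|\cdot\|_w$ ignores the constant Fourier coefficient, which is why $\|\Phi\|_w=\|T_{\ov\th}F\|_w$.
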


\par To keep on the safe side, we remark that sequences with unspecified index sets, which we occasionally 
employ, are allowed to be finite (and sometimes empty). In particular, the orthonormal basis $\{g_n\}$ in 
Theorem \ref{thm:orthdir} will be finite if and only if $\th$ is a finite Blaschke product. 

\par The proof will make use of the notion of a {\it Hilbert--Schmidt operator}. Recall that, given two separable 
Hilbert spaces $\mathcal H_1$ and $\mathcal H_2$, a linear operator $T:\mathcal H_1\to\mathcal H_2$ is said to be 
Hilbert--Schmidt if the quantity 
$$\|T\|_{\mathfrak S_2}:=\left(\sum_n\|Te_n\|_{\mathcal H_2}^2\right)^{1/2}$$ 
is finite for some (or each) orthonormal basis $\{e_n\}$ of $\mathcal H_1$. It is well known -- and easily 
shown -- that this quantity does not actually depend on the choice of $\{e_n\}$ and is therefore well 
defined. The set of all Hilbert--Schmidt operators from $\mathcal H_1$ to $\mathcal H_2$ is denoted 
by $\mathfrak S_2(\mathcal H_1,\mathcal H_2)$. 

\par Also, we need a lemma that relates Hilbert--Schmidt operators to Dirichlet-type spaces. We state and prove it 
now, before proceeding with the proof of Theorem \ref{thm:orthdir}. 

\begin{lem}\label{lem:multhankel} Let $F\in H^2$. Suppose that $w=\{w_k\}_{k=1}^\infty$ and 
$\ga=\{\ga_k\}_{k=1}^\infty$ are two sequences of nonnegative numbers related by 
\begin{equation}\label{eqn:partialsum}
w_n=\sum_{k=1}^n\ga_k\quad(n=1,2,\dots).
\end{equation}
Finally, consider the multiplier map $M_\ga$ acting by the rule    
\begin{equation}\label{eqn:defmultmap}
M_\ga\left(\sum_{k=1}^\infty a_k\ov z^k\right):=\sum_{k=1}^\infty\sqrt{\ga_k}a_k\ov z^k,\qquad z\in\T 
\end{equation}
(defined initially on the set of antianalytic trigonometric polynomials $\sum_ka_k\ov z^k$). Then the operator 
$M_\ga H_{\ov F}$ belongs (or has an extension belonging) to $\mathfrak S_2(H^2,\ov{H_0^2})$ if and only if 
$F\in\mathcal D_w$. Moreover, 
\begin{equation}\label{eqn:normmulthank}
\|M_\ga H_{\ov F}\|_{\mathfrak S_2}=\|F\|_w.
\end{equation}
\end{lem}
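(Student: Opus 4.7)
The plan is to compute the Hilbert--Schmidt norm of $M_\ga H_{\ov F}$ directly from its definition, using the standard monomial basis $\{z^n\}_{n\ge 0}$ of $H^2$, and then to rearrange the resulting double sum by means of the partial-sum relation \eqref{eqn:partialsum}.

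First I would apply $H_{\ov F}$ to $z^n$: expanding $\ov F=\sum_{j\ge 0}\ov{\widehat F(j)}\,\ov z^j$ and projecting onto $\ov{H^2_0}$ leaves
$$H_{\ov F}(z^n)=\sum_{k\ge 1}\ov{\widehat F(n+k)}\,\ov z^k.$$
Then $M_\ga$, acting term-by-term via \eqref{eqn:defmultmap}, transforms this into $\sum_{k\ge 1}\sqrt{\ga_k}\,\ov{\widehat F(n+k)}\,\ov z^k$, whose $L^2$-norm squared is $\sum_{k\ge 1}\ga_k|\widehat F(n+k)|^2$.

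Summing over $n\ge 0$ and swapping the order of summation (introducing $m:=n+k$, so that $k$ ranges from $1$ to $m$) produces
$$\sum_{n\ge 0}\sum_{k\ge 1}\ga_k|\widehat F(n+k)|^2
=\sum_{m\ge 1}|\widehat F(m)|^2\sum_{k=1}^m\ga_k
=\sum_{m\ge 1}w_m|\widehat F(m)|^2=\|F\|_w^2,$$
where \eqref{eqn:partialsum} is used in the penultimate step. This single computation simultaneously yields \eqref{eqn:normmulthank} and the claimed equivalence: if $F\in\mathcal D_w$ the double sum is finite, so the operator $M_\ga H_{\ov F}$ — a priori defined only by its values on the basis $\{z^n\}$, since $H_{\ov F}(z^n)$ is generally an infinite series lying outside the domain of $M_\ga$ as literally defined on antianalytic polynomials — extends by linearity and continuity to a Hilbert--Schmidt operator from $H^2$ into $\ov{H^2_0}$; conversely, if such an extension exists as a Hilbert--Schmidt operator, the same formula forces $F\in\mathcal D_w$.

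The only real subtlety, and thus the main (modest) obstacle, is the bookkeeping around the domain of $M_\ga$: since $M_\ga$ was introduced just on antianalytic trigonometric polynomials, one needs to observe that the formal termwise action $\sum_k a_k\ov z^k\mapsto\sum_k\sqrt{\ga_k}a_k\ov z^k$ extends to a (possibly unbounded) diagonal operator on $\ov{H^2_0}$, and that $M_\ga H_{\ov F}$ is Hilbert--Schmidt precisely when its values on the basis $\{z^n\}$ have square-summable norms — which is the very content of the calculation above. Once this extension issue is addressed, the result drops out immediately.
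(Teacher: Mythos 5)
Your proposal is correct and follows essentially the same route as the paper: compute $\|M_\ga H_{\ov F}z^n\|_2^2=\sum_{k\ge1}\ga_k|\widehat F(n+k)|^2$ via the monomial basis, sum over $n$, and interchange the order of summation using \eqref{eqn:partialsum} to obtain $\|F\|_w^2$. Your extra remark on extending the termwise action of $M_\ga$ beyond antianalytic polynomials is a reasonable bookkeeping point that the paper passes over silently, but it does not change the argument.
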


\begin{proof} Since $\{z^n\}_{n=0}^\infty$ is an orthonormal basis in $H^2$, we have 
\begin{equation}\label{eqn:kapusta}
\|M_\ga H_{\ov F}\|^2_{\mathfrak S_2}=\sum_{n=0}^\infty\|M_\ga H_{\ov F}z^n\|^2_2,
\end{equation}
where $\|\cdot\|_2$ is the usual $L^2$-norm. Letting $a_n:=\widehat F(n)$, we find that 
$$H_{\ov F}z^n=\sum_{k=1}^\infty\ov a_{n+k}\ov z^k,$$
whence 
$$M_\ga H_{\ov F}z^n=\sum_{k=1}^\infty\sqrt{\ga_k}\ov a_{n+k}\ov z^k,$$
and, by the Parseval identity, 
$$\|M_\ga H_{\ov F}z^n\|^2_2=\sum_{k=1}^\infty\ga_k|a_{n+k}|^2.$$
Plugging this into \eqref{eqn:kapusta} and recalling \eqref{eqn:partialsum}, we obtain 
\begin{equation*}
\begin{aligned}
\|M_\ga H_{\ov F}\|^2_{\mathfrak S_2}&=\sum_{n=0}^\infty\sum_{k=1}^\infty\ga_k|a_{n+k}|^2
=\sum_{j=1}^\infty|a_j|^2\sum_{k=1}^j\ga_k\\
&=\sum_{j=1}^\infty w_j|a_j|^2=\|F\|^2_w,
\end{aligned}
\end{equation*}
which proves \eqref{eqn:normmulthank} and the lemma. 
\end{proof}

\medskip\noindent{\it Proof of Theorem \ref{thm:orthdir}.} Let $M_\ga$ be the multiplier map defined by 
\eqref{eqn:defmultmap}. From Lemma \ref{lem:multhankel} we know that 
\begin{equation}\label{eqn:morkovka}
\|F\|^2_w=\|M_\ga H_{\ov F}\|^2_{\mathfrak S_2}.
\end{equation}
Consider the functions $G_n$ defined (a.e. on $\T$) by $G_n:=\bar z\bar g_n\th$. Since $\{g_n\}$ is an orthonormal 
basis in $K_\th$, the same is true for $\{G_n\}$ (indeed, the map $f\mapsto\bar z\bar f\th$ is an antilinear 
isometry of $K_\th$ onto itself). Furthermore, since $H^2=\th H^2\oplus K_\th$, the family $\{\th z^n\}_{n=0}^\infty
\cup\{G_n\}$ forms an orthonormal basis in $H^2$, and we may use it to compute the Hilbert--Schmidt norm 
in \eqref{eqn:morkovka}. In this way we obtain 
\begin{equation}\label{eqn:baklazhan}
\|M_\ga H_{\ov F}\|^2_{\mathfrak S_2}=\sum_{n=0}^\infty\|M_\ga H_{\ov F}(\th z^n)\|^2_2
+\sum_n\|M_\ga H_{\ov F}G_n\|^2_2=S_1+S_2,
\end{equation}
where $S_1$ and $S_2$ denote the two preceding sums, in the same order. The elementary identity 
\begin{equation}\label{eqn:elemid} 
P_-\ph=\bar z\ov{P_+(\bar z\bar\ph)},\qquad\ph\in L^2,
\end{equation}
yields 
$$P_-(\ov F\th)=\bar z\ov{P_+(\bar zF\bar\th)}=\ov\Phi,$$
whence 
\begin{equation*}
\begin{aligned}
H_{\ov F}(\th z^n)&=P_-(\ov F\th z^n)=P_-(P_-(\ov F\th)\cdot z^n)\\
&=P_-(\ov\Phi z^n)=H_{\ov\Phi}z^n.
\end{aligned}
\end{equation*}
Thus, 
\begin{equation}\label{eqn:sone}
S_1=\sum_{n=0}^\infty\|M_\ga H_{\ov\Phi}z^n\|^2_2=\|M_\ga H_{\ov\Phi}\|^2_{\mathfrak S_2}=\|\Phi\|^2_w,
\end{equation}
where the last equality relies on Lemma \ref{lem:multhankel}. 
\par Another application of \eqref{eqn:elemid} gives 
$$H_{\ov F}G_n=P_-(\ov F\bar z\bar g_n\th)=\bar z\ov{P_+(Fg_n\bar\th)}=\ov h_n,$$
and so 
$$\|M_\ga H_{\ov F}G_n\|^2_2=\sum_{k=1}^\infty\ga_k\left|\widehat{(H_{\ov F}G_n)}(-k)\right|^2=
\sum_{k=1}^\infty\ga_k|\widehat h_n(k)|^2=\|h_n\|^2_\ga.$$ 
Summing over $n$, we get 
\begin{equation}\label{eqn:stwo}
S_2=\sum_n\|M_\ga H_{\ov F}G_n\|^2_2=\sum_n\|h_n\|^2_\ga.
\end{equation}
Finally, we plug the identities coming from \eqref{eqn:sone} and \eqref{eqn:stwo} into \eqref{eqn:baklazhan}. 
Together with \eqref{eqn:morkovka}, this yields the required formula \eqref{eqn:orthrel}. \quad\qed

\medskip As a consequence of Theorem \ref{thm:orthdir}, we now deduce a result of Korenblum and Fa\u\i vyshevski\u\i 
\,\,concerning the action of certain Toeplitz operators on Dirichlet-type spaces. (In all fairness, their original 
theorem in \cite{KF} gives a bit more than our Corollary \ref{cor:korfa} below. Alternative routes to that result 
can be found in \cite{K} and \cite{R}.) To state it, we need a minor modification of the $\|\cdot\|_w$ norm. 
Namely, given a sequence $v=\{v_n\}_{n=0}^\infty$ of positive numbers and a holomorphic function 
$f(z)=\sum_{n=0}^\infty\widehat f(n)z^n$ on $\D$, we put 
$$\|f\|_{v,0}:=\left(\sum_{n=0}^\infty v_n|\widehat f(n)|^2\right)^{1/2}$$
(note that the value $n=0$ is now included). 

\begin{cor}\label{cor:korfa} Let $v=\{v_n\}_{n=0}^\infty$ be a nondecreasing sequence of positive numbers, and 
let $\th$ be an inner function. Then, for every $f,g\in H^2$, we have 
\begin{equation}\label{eqn:toepdir}
\|T_{\ov\th}f\|_{v,0}\le\|f\|_{v,0}
\end{equation}
and 
\begin{equation}\label{eqn:divdir}
\|g\|_{v,0}\le\|g\th\|_{v,0}.
\end{equation}
\end{cor}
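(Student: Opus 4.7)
The plan is to deduce both inequalities from Theorem \ref{thm:orthdir} after a harmless index shift that converts the $\|\cdot\|_{v,0}$ norm into a $\|\cdot\|_w$ norm of the kind handled by that theorem. Specifically, setting $w_k := v_{k-1}$ for $k\ge1$ gives a nondecreasing sequence of nonnegative numbers (since $v$ is nondecreasing and positive), and for any $F\in H^2$ the function $zF$ satisfies $\widehat{(zF)}(k)=\widehat F(k-1)$, so that
\begin{equation*}
\|zF\|_w^2=\sum_{k=1}^\infty w_k|\widehat{(zF)}(k)|^2=\sum_{n=0}^\infty v_n|\widehat F(n)|^2=\|F\|_{v,0}^2.
\end{equation*}
Thus $\|F\|_{v,0}=\|zF\|_w$ for every $F\in H^2$.

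For \eqref{eqn:toepdir}, I would apply Theorem \ref{thm:orthdir} with the choice $F:=zf$. Then
\begin{equation*}
\Phi=zT_{\ov z\ov\th}(zf)=zP_+(\ov\th f)=zT_{\ov\th}f,
\end{equation*}
so $\|\Phi\|_w=\|T_{\ov\th}f\|_{v,0}$ while $\|F\|_w=\|f\|_{v,0}$. Since the sequence $\ga=\{\ga_k\}$ defined by \eqref{eqn:defgamma} consists of nonnegative numbers, the sum $\sum_n\|h_n\|_\ga^2$ in \eqref{eqn:orthrel} is nonnegative, and dropping it yields $\|f\|_{v,0}^2\ge\|T_{\ov\th}f\|_{v,0}^2$, which is \eqref{eqn:toepdir}.

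For \eqref{eqn:divdir}, I would instead choose $F:=zg\th$ in Theorem \ref{thm:orthdir}, so that $\|F\|_w=\|g\th\|_{v,0}$. A one-line computation
\begin{equation*}
\Phi=zT_{\ov z\ov\th}(zg\th)=zP_+(g)=zg
\end{equation*}
gives $\|\Phi\|_w=\|g\|_{v,0}$; once again, discarding the nonnegative term $\sum_n\|h_n\|_\ga^2$ in \eqref{eqn:orthrel} produces $\|g\th\|_{v,0}^2\ge\|g\|_{v,0}^2$, which is \eqref{eqn:divdir}. The only place that might look like an obstacle is verifying that the tail sum is genuinely nonnegative, but this is automatic from monotonicity of $v$ (hence of $w$); neither inequality requires computing the orthonormal basis $\{g_n\}$ or the functions $h_n$ explicitly, so the proof reduces to the two calculations of $\Phi$ above.
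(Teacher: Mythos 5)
Your proposal is correct and follows essentially the same route as the paper: the index shift $w_k=v_{k-1}$, the choice $F=zf$ in Theorem \ref{thm:orthdir}, and discarding the nonnegative sum $\sum_n\|h_n\|_\ga^2$. Your direct treatment of \eqref{eqn:divdir} via $F=zg\th$ is just an unfolding of the paper's shortcut of applying \eqref{eqn:toepdir} with $f=g\th$, so there is no substantive difference.
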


\begin{proof} Put $F:=zf$ and define $\Phi$ as in Theorem \ref{thm:orthdir}, so that 
$$\Phi=zT_{\ov z\ov\th}F=zT_{\ov\th}f.$$
For $n=1,2,\dots$, let $w_n=v_{n-1}$ and $w=\{w_n\}_{n=1}^\infty$. Theorem \ref{thm:orthdir} implies that 
$\|\Phi\|_w\le\|F\|_w$. Observing that $\|\Phi\|_w=\|T_{\ov\th}f\|_{v,0}$ and $\|F\|_w=\|f\|_{v,0}$, we arrive 
at \eqref{eqn:toepdir}. To prove \eqref{eqn:divdir}, it suffices to apply \eqref{eqn:toepdir} with $f=g\th$. 
\end{proof} 

\par The next fact is likewise a straightforward consequence of Theorem \ref{thm:orthdir}. 

\begin{thm}\label{thm:orthdirbis} Let $w=\{w_k\}_{k=1}^\infty$ be a nondecreasing sequence with $w_1\ge0$, and 
let $\ga=\{\ga_k\}_{k=1}^\infty$ be defined by \eqref{eqn:defgamma}. If $f\in H^2$, $\th$ is an inner function, 
and $\{g_n\}$ is an orthonormal basis in $K_\th$, then 
\begin{equation}\label{eqn:orthrelbis}
\|f\th\|^2_w=\|f\|^2_w+\sum_n\|zfg_n\|^2_\ga.
\end{equation}
\end{thm}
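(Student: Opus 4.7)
The plan is to derive \eqref{eqn:orthrelbis} as an immediate specialization of Theorem \ref{thm:orthdir}, by plugging in $F:=f\th$ there. With this choice the hypotheses of Theorem \ref{thm:orthdir} are clearly met (the monotonicity $0\le w_1\le w_2\le\cdots$ is exactly what is assumed here, so $\ga_k\ge0$ for all $k$), so the only task left is to identify the quantities $\Phi=zT_{\ov z\ov\th}F$ and $h_n=zT_{\ov\th}(Fg_n)$ in closed form.

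First I would compute $\Phi$. Since $\ov\th\th=1$ a.e.\ on $\T$,
$$T_{\ov z\ov\th}(f\th)=P_+(\ov z\ov\th\cdot f\th)=P_+(\ov zf)=\f{f-f(0)}{z},$$
which gives $\Phi=f-f(0)$. Because $\|\cdot\|_w$ is blind to the coefficient of index $0$ (the defining sum runs over $k\ge1$), we obtain $\|\Phi\|_w=\|f\|_w$, which is the first term on the right-hand side of \eqref{eqn:orthrelbis}.

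Next I would compute $h_n$. Using $\ov\th\th=1$ once more,
$$T_{\ov\th}(f\th g_n)=P_+(fg_n)=fg_n,$$
where the last equality reflects the fact that $fg_n\in H^1$ (as a product of two $H^2$ functions, by the Cauchy--Schwarz inequality on $\T$) and is therefore fixed by $P_+$. Hence $h_n=zfg_n$, so $\|h_n\|_\ga=\|zfg_n\|_\ga$, matching the $n$th summand on the right-hand side of \eqref{eqn:orthrelbis}. Inserting these two identifications into \eqref{eqn:orthrel} yields \eqref{eqn:orthrelbis} on the nose.

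No serious obstacle is anticipated: the delicate part -- decomposing $H^2=\th H^2\oplus K_\th$ and computing a Hilbert--Schmidt norm in the resulting basis -- is already packaged inside Theorem \ref{thm:orthdir} via Lemma \ref{lem:multhankel}. The two mildly subtle points, both easy to dispatch, are that (i) the constant $f(0)$ dropped by $\Phi$ genuinely is annihilated by $\|\cdot\|_w$, which is built into the definition, and (ii) $P_+$ acts as the identity on the $H^1$ function $fg_n$, which is immediate from its Fourier-series definition.
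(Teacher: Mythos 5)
Your proposal is correct and matches the paper's proof essentially verbatim: both specialize Theorem \ref{thm:orthdir} with $F=f\th$, use $|\th|=1$ a.e.\ on $\T$ to compute $\Phi=zT_{\ov z}f=f-f(0)$ (so $\|\Phi\|_w=\|f\|_w$) and $h_n=zfg_n$, and then read off \eqref{eqn:orthrelbis} from \eqref{eqn:orthrel}. The extra remarks you add (the $0$th coefficient being invisible to $\|\cdot\|_w$, and $P_+$ fixing $fg_n\in H^1$) are just minor elaborations of the same argument.
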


\begin{proof} Put $F:=f\th$, and define $\Phi$ and $h_n$ as in Theorem \ref{thm:orthdir}. We have then 
$$\Phi=zT_{\ov z\ov\th}(f\th)=zT_{\ov z}f=f-f(0),$$
whence $\|\Phi\|_w=\|f\|_w$. Also, 
$$h_n=zT_{\ov\th}(f\th g_n)=zfg_n.$$ 
The formula \eqref{eqn:orthrel} therefore reduces to \eqref{eqn:orthrelbis}, and the proof is complete. 
\end{proof}

\par In some special cases, Theorem \ref{thm:orthdirbis} can be used to derive a more explicit form of the 
(nonnegative) \lq\lq discrepancy term" 
\begin{equation}\label{eqn:discrep}
R_w(f,\th):=\|f\th\|^2_w-\|f\|^2_w. 
\end{equation}
One such case is pointed out in Theorem \ref{thm:orthdirmom} below. Before stating the result, we need to recall 
some basic facts about angular derivatives. 

\par Given a function $\ph\in H^\infty$ with $\|\ph\|_\infty=1$, we say that $\ph$ has an {\it angular derivative} 
(in the sense of Carath\'eodory) at a point $\ze\in\T$ if both $\ph$ and $\ph'$ have nontangential limits 
at $\ze$, the former of these being of modulus $1$. (The two limits are then denoted by $\ph(\ze)$ and $\ph'(\ze)$, 
respectively.) The classical Julia--Carath\'eodory theorem (see \cite[Chapter VI]{B}, \cite[Chapter I]{Car} 
or \cite[Chapter VI]{Sar2}) asserts that this happens if and only if 
\begin{equation}\label{eqn:liminf}
\liminf_{z\to\ze}\f{1-|\ph(z)|^2}{1-|z|^2}<\infty.
\end{equation}
And if \eqref{eqn:liminf} holds, the theorem tells us also that $\ph'(\ze)$ coincides with the limit 
of the difference quotient 
$$\f{\ph(z)-\ph(\ze)}{z-\ze}$$ 
as $z\to\ze$ nontangentially. Moreover, $|\ph'(\ze)|$ will then agree with the value of the 
(unrestricted) $\liminf$ in \eqref{eqn:liminf}, and this remains true if $\liminf$ is replaced 
by the corresponding nontangential limit. 

\par Finally, if $\th=BS$ is an inner function (with $B$ a Blaschke product and $S$ singular), then 
\begin{equation}\label{eqn:angderinn}
|\th'(\ze)|=\sum_j\f{1-|a_j|^2}{|\ze-a_j|^2}+2\int_\T\f{d\mu(\eta)}{|\ze-\eta|^2},\qquad\ze\in\T,
\end{equation}
where $\{a_j\}$ is the zero sequence of $B$ and $\mu$ is the singular measure associated with $S$. This 
formula can be found in \cite{AC}; it holds for {\it every} point $\ze$ of $\T$, with the convention that 
$|\th'(\ze)|=\infty$ whenever $\th$ fails to possess an angular derivative at $\ze$. 

\begin{thm}\label{thm:orthdirmom} Let $\si$ be a positive Borel measure on $[0,1]$ with 
$\int_{[0,1]}x^2d\si(x)<\infty$. Put 
$$\ga_k:=\int_{[0,1]}x^{2k}d\si(x),\qquad k=1,2,\dots,$$
and define the sequence $w=\{w_n\}_{n=1}^\infty$ by \eqref{eqn:partialsum}. If $f\in H^2$ and $\th$ is an inner 
function, then 
\begin{equation}\label{eqn:carltypeorthrel}
\|f\th\|^2_w=\|f\|^2_w+\int_\T dm(\ze)\int_{[0,1]}r^2|f(r\ze)|^2\f{1-|\th(r\ze)|^2}{1-r^2}d\si(r).
\end{equation}
Here the value of $(1-|\th(r\ze)|^2)/(1-r^2)$ at $r=1$ is interpreted as $|\th'(\ze)|$, the modulus of the angular 
derivative of $\th$ at $\ze$. 
\end{thm}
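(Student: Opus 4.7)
My plan is to apply Theorem~\ref{thm:orthdirbis}, which reduces the claim to showing that the \lq\lq discrepancy'' $\sum_n \|zfg_n\|_\ga^2$ coincides with the double integral in \eqref{eqn:carltypeorthrel}. The proof will proceed by unpacking $\|zfg_n\|_\ga^2$ through its Fourier coefficients, applying Tonelli's theorem (everything is nonnegative), and then recognising the resulting inner sum via the reproducing kernel of $K_\th$ on its diagonal.

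First I would observe that $\widehat{zfg_n}(k) = \widehat{fg_n}(k-1)$ for $k\ge 1$. Combined with the definition $\ga_k = \int_{[0,1]} x^{2k}\,d\si(x)$, Tonelli yields
$$
\|zfg_n\|_\ga^2
= \sum_{k\ge 1} \ga_k \, |\widehat{fg_n}(k-1)|^2
= \int_{[0,1]} x^2 \Bigl( \sum_{j\ge 0} x^{2j}\,|\widehat{fg_n}(j)|^2 \Bigr) d\si(x).
$$
For $x\in[0,1)$, the inner sum equals $\int_\T |(fg_n)(x\ze)|^2\,dm(\ze)$ by Parseval applied to the dilation of $fg_n\in H^1$; since both $f$ and $g_n$ are holomorphic on $\D$, this factors as $|f(x\ze)|^2|g_n(x\ze)|^2$ pointwise.

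Summing over $n$ and applying Tonelli once more to interchange the sum with the two integrals, the decisive ingredient becomes the well-known identity
$$
\sum_n |g_n(z)|^2 = k_z(z) = \f{1-|\th(z)|^2}{1-|z|^2}, \qquad z\in\D,
$$
valid for any orthonormal basis $\{g_n\}$ of $K_\th$ (this is immediate from the reproducing property of $k_z$). Substituting $z = x\ze$ and renaming $x$ as $r$ produces exactly the right-hand side of \eqref{eqn:carltypeorthrel}, restricted to $r\in[0,1)$, which already gives the theorem whenever $\si(\{1\})=0$.

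The main obstacle is the treatment of a possible atom of $\si$ at $r=1$, where the diagonal kernel formula $(1-|\th(r\ze)|^2)/(1-r^2)$ degenerates to $0/0$. The Julia--Carath\'eodory theorem, recalled in the paragraph preceding the theorem, gives $(1-|\th(r\ze)|^2)/(1-r^2)\to|\th'(\ze)|$ as $r\to 1^-$, with value $+\infty$ precisely where the angular derivative fails to exist; this justifies the stated convention for the integrand at $r=1$. To legitimately pair the atomic contribution of $\si$ at $1$ with the boundary value $|\th'(\ze)|$, I would truncate $\si$ to $[0,1-\eps]$, apply the identity already established for $\si$-supported measures off the boundary, and then let $\eps\to 0^+$. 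Monotone convergence handles the right-hand side (by the pointwise convergence of the integrand and positivity of $d\si$), while on the left $\sum_n\|zfg_n\|_{\ga_\eps}^2\nearrow\sum_n\|zfg_n\|_\ga^2$ since $\ga_{k,\eps}\nearrow\ga_k$ term by term. The delicate point is identifying the pointwise boundary limit of $\sum_n|g_n(r\ze)|^2$ with $|\th'(\ze)|$ almost everywhere on $\T$, which follows from the Ahern--Clark theorem on boundary behaviour of kernels in $K_\th$ together with \eqref{eqn:angderinn}.
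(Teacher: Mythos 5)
Your reduction via Theorem~\ref{thm:orthdirbis}, the Fourier-coefficient/Tonelli computation of $\|zfg_n\|_\ga^2$, and the use of $\sum_n|g_n(z)|^2=(1-|\th(z)|^2)/(1-|z|^2)$ reproduce, in slightly different clothing, exactly the paper's Case~1 (where $\si(\{1\})=0$), and that part is fine. The gap is in your treatment of the atom at $r=1$. If you truncate $\si$ to $[0,1-\eps]$, then $\ga_{k,\eps}=\int_{[0,1-\eps]}x^{2k}\,d\si$ increases, as $\eps\to0^+$, to $\int_{[0,1)}x^{2k}\,d\si=\ga_k-\si(\{1\})$, \emph{not} to $\ga_k$; likewise the right-hand integrals increase only to the integral over $[0,1)$. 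So your limiting argument merely re-derives the nonatomic identity and never produces the atomic contribution on either side: the statement you need, namely that the atom pairs with $\int_\T|f|^2|\th'|\,dm$, is not touched by it. Even under the more charitable reading in which the atom is moved to $1-\eps$ (so that $\ga_{k,\eps}\nearrow\ga_k$ does hold), monotone convergence does not justify the passage to the limit on the right-hand side, because $r\mapsto r^2|f(r\ze)|^2\,\f{1-|\th(r\ze)|^2}{1-r^2}$ is not monotone in $r$; positivity plus a.e. radial convergence gives only a Fatou inequality.

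The atom has to be handled directly, which is what the paper does in its Case~2 and which you can do with the ingredients you already have. The atomic part of $\ga$ is the constant sequence $\si(\{1\})$, so its contribution to $\|zfg_n\|_\ga^2$ is $\si(\{1\})\sum_{j\ge0}|\widehat{fg_n}(j)|^2=\si(\{1\})\,\|fg_n\|_{L^2(\T)}^2$ (Parseval for the $H^1$ function $fg_n$, both sides possibly infinite); summing over $n$ by Tonelli on $\T$ and invoking the boundary identity $\sum_n|g_n(\ze)|^2=|\th'(\ze)|$ a.e.\ (Lemma~\ref{lem:sumbasis}, or the Ahern--Clark result you cite together with \eqref{eqn:angderinn}) gives $\si(\{1\})\int_\T|f|^2|\th'|\,dm$, which is precisely the $r=1$ contribution to the right-hand side under the stated convention. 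Adding this to your $[0,1)$ computation (decompose $\si=\si|_{[0,1)}+\si(\{1\})\de_1$, so the $\ga$-norm splits accordingly) completes the proof; no $\eps$-limit is needed.
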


\par The proof will rely on Theorem \ref{thm:orthdirbis} and on the following lemma. 

\begin{lem}\label{lem:sumbasis} Let $\th$ be an inner function, and let $\{g_n\}$ be an orthonormal basis in $K_\th$. 
Then 
\begin{equation}\label{eqn:suminside}
\sum_n|g_n(z)|^2=\f{1-|\th(z)|^2}{1-|z|^2},\qquad z\in\D.
\end{equation}
Furthermore, if $\ze\in\T$ is a point at which the limits $\lim_{r\to1^-}g_n(r\ze)=:g_n(\ze)$ exist for all $n$, then 
\begin{equation}\label{eqn:sumboundary}
\sum_n|g_n(\ze)|^2=|\th'(\ze)|.
\end{equation}
\end{lem}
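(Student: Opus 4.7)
The plan is to derive \eqref{eqn:suminside} from the reproducing kernel structure on $K_\th$, and then to deduce \eqref{eqn:sumboundary} by a boundary-value argument.

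For \eqref{eqn:suminside}, I would expand the reproducing kernel $k_z$ of $K_\th$ in the orthonormal basis $\{g_n\}$. Since $g_n(z)=\langle g_n,k_z\rangle$ for every $n$, the Bessel--Parseval equality gives
$$\|k_z\|_2^2=\sum_n|\langle k_z,g_n\rangle|^2=\sum_n|g_n(z)|^2.$$
On the other hand, the reproducing property combined with the explicit formula for $k_z$ recalled in the Introduction yields
$$\|k_z\|_2^2=\langle k_z,k_z\rangle=k_z(z)=\f{1-|\th(z)|^2}{1-|z|^2},$$
and comparing the two expressions produces \eqref{eqn:suminside}.

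For \eqref{eqn:sumboundary}, fix a point $\ze\in\T$ satisfying the hypothesis and let $z=r\ze$ with $r\nearrow 1$. By the Julia--Carath\'eodory theorem (recalled just before the lemma), the right-hand side of \eqref{eqn:suminside} tends to $|\th'(\ze)|$, with the convention that this limit is $+\infty$ in case $\th$ fails to possess an angular derivative at $\ze$. An application of Fatou's lemma to the nonnegative series $\sum_n|g_n(r\ze)|^2$ then produces
$$\sum_n|g_n(\ze)|^2\le\liminf_{r\to 1^-}\f{1-|\th(r\ze)|^2}{1-r^2}=|\th'(\ze)|.$$
For the reverse inequality I would invoke the Ahern--Clark theorem: under the hypothesis that every element of $K_\th$ (in particular every $g_n$) admits a nontangential limit at $\ze$, the function $\th$ must have an angular derivative at $\ze$, the unimodular value $\th(\ze)$ exists as a nontangential limit, and the \emph{boundary reproducing kernel}
$$k_\ze(\eta):=\f{1-\ov{\th(\ze)}\th(\eta)}{1-\ov\ze\,\eta}$$
belongs to $K_\th$ with $\|k_\ze\|_2^2=|\th'(\ze)|$ and with $\langle g_n,k_\ze\rangle=g_n(\ze)$ for every $n$. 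A second application of Bessel--Parseval, exactly as in the first step, then gives
$$|\th'(\ze)|=\|k_\ze\|_2^2=\sum_n|g_n(\ze)|^2,$$
which combined with the Fatou bound above yields the required equality.

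The main obstacle is the reverse inequality in \eqref{eqn:sumboundary}: the Fatou direction is essentially a free consequence of \eqref{eqn:suminside}, but the matching lower bound (equivalently, the finiteness of $|\th'(\ze)|$ under the hypothesis) is the deep part and relies on the Ahern--Clark characterization of points where $K_\th$ functions admit nontangential limits, together with the fact that the formal kernel $k_\ze$ persists as a genuine element of $K_\th$ of norm squared equal to $|\th'(\ze)|$.
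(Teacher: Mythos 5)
Your proof of \eqref{eqn:suminside} is exactly the paper's: $g_n(z)=\langle g_n,k_z\rangle$, Parseval, and $\|k_z\|_2^2=k_z(z)$. For \eqref{eqn:sumboundary} the paper simply puts $z=r\ze$ in \eqref{eqn:suminside} and lets $r\to1^-$, whereas you split the passage to the limit into a Fatou half and a reverse half based on the Ahern--Clark boundary kernel. The Fatou half is fine, and your reverse half is correct \emph{when $\th$ has a finite angular derivative at $\ze$} (indeed it supplies the limit--sum interchange that the paper's one-line argument leaves implicit); but as written it contains a genuine gap.

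The gap: you invoke Ahern--Clark ``under the hypothesis that every element of $K_\th$ admits a nontangential limit at $\ze$'' and claim, in the parenthetical, that the lemma's hypothesis forces $|\th'(\ze)|<\infty$. That is not the lemma's hypothesis: only the countably many basis functions $g_n$ are assumed to have radial limits at $\ze$, and this does not imply the Ahern--Clark condition. Concretely, take $\th$ a Blaschke product with zeros $a_j=(1-2^{-j})\ze$, so that $\sum_j(1-|a_j|^2)/|\ze-a_j|^2=\infty$ and hence $|\th'(\ze)|=\infty$ by \eqref{eqn:angderinn}; the Takenaka--Malmquist orthonormal basis of $K_\th$ consists of rational functions whose poles lie at the finitely many points $1/\ov a_j$, $j\le n$, so each $g_n$ is analytic across $\T$ near $\ze$ and all radial limits exist. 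In this situation \eqref{eqn:sumboundary} asserts that $\sum_n|g_n(\ze)|^2$ diverges (which it does here, since $|g_n(\ze)|^2=(1-|a_n|^2)/|\ze-a_n|^2$), but your argument gives nothing: Fatou yields only the vacuous bound $\le\infty$, and there is no boundary kernel $k_\ze$ in $K_\th$ with which to run Parseval. So your proof covers only points with $|\th'(\ze)|<\infty$, while the lemma -- and its application in Theorem \ref{thm:orthdirmom}, where \eqref{eqn:sumboundary} is used a.e.\ on $\T$, including points where $|\th'|=\infty$ -- also requires the divergence statement; that case needs a separate argument, not a reduction to the finiteness of $|\th'(\ze)|$.
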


\par To prove the lemma, consider the reproducing kernel 
$$k_z(w)=\f{1-\ov{\th(z)}\th(w)}{1-\ov zw}$$ 
of $K_\th$ and use Parseval's identity to get 
$$\sum_n|g_n(z)|^2=\sum_n|\langle g_n,k_z\rangle|^2=\|k_z\|_2^2=k_z(z)=\f{1-|\th(z)|^2}{1-|z|^2}$$ 
for $z\in\D$. This yields \eqref{eqn:suminside}, which in turn implies \eqref{eqn:sumboundary} upon putting 
$z=r\ze$ and passing to the limit as $r\to1^-$. 

\medskip\noindent{\it Proof of Theorem \ref{thm:orthdirmom}.} We may assume that $f\in\mathcal D_w$, since 
otherwise both sides of \eqref{eqn:carltypeorthrel} equal $\infty$. By Theorem \ref{thm:orthdirbis}, 
the \lq\lq discrepancy term" \eqref{eqn:discrep} is given by 
\begin{equation}\label{eqn:discrepbasis}
R_w(f,\th)=\sum_n\|zfg_n\|^2_\ga,
\end{equation}
where $\ga=\{\ga_k\}_{k=1}^\infty$ and $\{g_n\}$ is some (no matter which) orthonormal basis in $K_\th$. 
This said, we proceed by considering two special cases. 

\smallskip{\it Case 1:} $\si$ has no atom at $1$. We may think of the disk 
$$\D=\{r\ze:\,r\in[0,1),\,\ze\in\T\}$$
as of a measure space endowed with the product measure $\si\times m=:\nu$. The monomials $z^k$ ($k=1,2,\dots$) 
are then mutually orthogonal in $L^2(\D)$ and have norms $\sqrt{\ga_k}$. Therefore, for a function 
$h(z)=\sum_{k=1}^\infty\widehat h(k)z^k$ in $zH^1$, we have 
$$\|h\|^2_{L^2(\D,\nu)}=\sum_{k=1}^\infty\ga_k|\widehat h(k)|^2=\|h\|^2_\ga.$$ 
Applying this to $h_n:=zfg_n$ gives 
$$\|h_n\|^2_\ga=\|h_n\|^2_{L^2(\D,\nu)}=\int_\T dm(\ze)\int_{[0,1]}r^2|f(r\ze)|^2|g_n(r\ze)|^2d\si(r).$$ 
Consequently, in view of \eqref{eqn:discrepbasis}, 
\begin{equation}\label{eqn:discrepintegral}
R_w(f,\th)=\sum_n\|h_n\|^2_\ga=\int_\T dm(\ze)\int_{[0,1]}r^2|f(r\ze)|^2\sum_n|g_n(r\ze)|^2d\si(r). 
\end{equation}
By Lemma \ref{lem:sumbasis}, 
$$\sum_n|g_n(r\ze)|^2=\f{1-|\th(r\ze)|^2}{1-r^2},$$
and so \eqref{eqn:discrepintegral} reduces to 
$$R_w(f,\th)=\int_\T dm(\ze)\int_{[0,1]}r^2|f(r\ze)|^2\f{1-|\th(r\ze)|^2}{1-r^2}d\si(r),$$
which proves \eqref{eqn:carltypeorthrel}. 

\smallskip{\it Case 2:} $\si$ is the unit point mass at $1$. In this case, we have $\ga_k=1$ and $w_k=k$, so 
that $\|\cdot\|_\ga=\|\cdot\|_2$ on $zH^2$, and $\|\cdot\|_w=\|\cdot\|_{\mathcal D}$. Therefore, we can rewrite 
\eqref{eqn:discrepbasis} in the form 
$$\|f\th\|^2_{\mathcal D}-\|f\|^2_{\mathcal D}=\sum_n\|zfg_n\|^2_2=\int_\T|f(\ze)|^2\sum_n|g_n(\ze)|^2dm(\ze).$$
Combining this with \eqref{eqn:sumboundary}, we finally obtain 
\begin{equation}\label{eqn:discrepdir}
\|f\th\|^2_{\mathcal D}-\|f\|^2_{\mathcal D}=\int_\T|f(\ze)|^2|\th'(\ze)|dm(\ze),
\end{equation}
which coincides with \eqref{eqn:carltypeorthrel} under the current hypothesis on $\si$. 
\par The general case being a combination of Cases 1 and 2, the required result follows.\quad\qed

\medskip\noindent{\it Remark.} Recalling the identity \eqref{eqn:angderinn} and plugging it 
into \eqref{eqn:discrepdir}, we find that 
\begin{equation}\label{eqn:carlform}
\|f\th\|^2_{\mathcal D}=\|f\|^2_{\mathcal D}+
\int_\T|f(\ze)|^2\left(\sum_j\f{1-|a_j|^2}{|\ze-a_j|^2}+2\int_\T\f{d\mu(\eta)}{|\ze-\eta|^2}\right)dm(\ze)
\end{equation}
(here, as before, $\{a_j\}$ is the zero sequence of $\th$, and $\mu$ is the associated singular measure). 
This was established by Carleson in \cite{Carl}. In fact, the formula given there is a combination of 
\eqref{eqn:carlform} and an explicit expression for the Dirichlet integral $\|f\|^2_{\mathcal D}$ of an 
outer function $f$.

\section{Model subspaces in $\bmoa$} 

\par It has been noticed that various smoothness properties of an inner function $\th$, if available, 
tend to be inherited (typically, in a weaker form) by functions in $K^p_\th$. This phenomenon becomes 
especially pronounced when passing from $\th$ to 
$$\kbmo:=K^2_\th\cap\bmoa,$$ 
the star-invariant subspace of $\bmoa$, in which case no loss of smoothness 
usually occurs. (Of course, the smoothness property in question should not be too 
strong -- it should not even imply continuity -- if we want a nontrivial inner function 
to have it.) A result to that effect will appear as Corollary \ref{cor:unity} below; we shall 
deduce it from a more general theorem concerning the action of a coanalytic Toeplitz operator $\tg$, 
with $g\in H^1$, on $\kbmo$. However, the very meaning of the expression $\tg f$ (with $f\in\kbmo$) is 
not immediately clear, since the product $f\bar g$ need not be integrable. The following proposition 
will clarify the situation. 

\begin{prop}\label{prop:deftg} Given $f\in\kbmo$ and $g\in H^1$, there 
exists a function $\Phi\in\cap_{0<p<1}H^p$ such that 
$$\|T_{\ov g_n}f-\Phi\|_p\to0$$ 
for every $p\in(0,1)$ and every sequence $\{g_n\}\sb H^2$ with 
$\|g_n-g\|_1\to0$. 
\end{prop}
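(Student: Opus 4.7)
The plan is to establish the key estimate
\begin{equation*}
\|T_{\bar h} f\|_{H^p} \le C_p \|f\|_* \|h\|_{H^1}, \qquad h \in H^2,\ p \in (0,1),
\end{equation*}
and then deduce the proposition by passing to the limit. First note that for $h \in H^2$ the product $\bar h f$ lies in $L^1$ (since $f \in \kbmo \sb H^2$), so $T_{\bar h} f = P_+(\bar h f)$ is well defined; moreover, the weak-$(1,1)$ boundedness of $P_+$ (Kolmogorov) places $T_{\bar h}f$ in $H^p$ for every $p \in (0,1)$. Granting the estimate, applying it with $h = g_n - g_m$ shows that $\{T_{\bar g_n}f\}$ is Cauchy in every $H^p$, $0<p<1$; the resulting limit (independent of the approximating sequence by the same estimate applied to the difference of two sequences) defines a single function $\Phi \in \bigcap_{0<p<1}H^p$, since the $H^p$-limits for different $p$ must agree as boundary functions.

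The main task is thus the key estimate, and I would prove it by duality. Fix $p \in (0,1)$ and set $\al := 1/p - 1 > 0$. The Duren--Romberg--Shields theorem realizes $\La^\al$ as the dual of $H^p$ via the pairing $\langle \ph, \psi \rangle = \int_\T \ph \bar\psi \, dm$, and yields $\|T_{\bar h}f\|_{H^p}$ comparable to the supremum of $|\langle T_{\bar h}f, \psi\rangle|$ over $\psi$ in the unit ball of $\La^\al$. Since $T_{\bar h}f$ is analytic, this pairing depends only on $P_+\psi$; invoking Lemma \ref{lem:havin}(a) to bound $\|P_+\psi\|_{\La^\al}$ by $\|\psi\|_{\La^\al}$, I may therefore assume $\psi \in \Al$. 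For such $\psi$, using analyticity of $\psi$ together with $\bar h f \in L^1$, a Fourier-coefficient computation gives
\begin{equation*}
\int_\T T_{\bar h}f \cdot \bar\psi \, dm = \int_\T \bar h f \cdot \bar\psi \, dm = \int_\T f \cdot \overline{h\psi}\, dm.
\end{equation*}
Since $h \in H^1$ and $\psi \in \Al \sb H^\infty$, the product $h\psi$ lies in $H^1$ with $\|h\psi\|_{H^1} \le \|h\|_{H^1}\|\psi\|_\infty \le C\,\|h\|_{H^1}\|\psi\|_{\La^\al}$. Fefferman's $H^1$-$\bmo$ duality then yields
\begin{equation*}
\left|\int_\T f\, \overline{h\psi}\, dm\right| \le C \,\|f\|_*\, \|h\psi\|_{H^1} \le C'\,\|f\|_*\,\|h\|_{H^1}\,\|\psi\|_{\La^\al},
\end{equation*}
and taking the supremum over unit-norm $\psi$ delivers the key estimate.

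The main obstacle is the rigorous linking of the two dualities, because for $p<1$ the pairing $\langle T_{\bar h}f, \psi\rangle$ has to be interpreted carefully: $T_{\bar h}f$ is known to lie in $L^p$ with $p<1$, but not in $L^1$. One needs to check that, for $\bar h f \in L^1$ and $\psi \in \Al$, the Abel means of the relevant Fourier sums on either side agree, so that $\int_\T T_{\bar h}f\cdot\bar\psi\,dm$ -- interpreted via radial limits -- genuinely equals $\int_\T \bar h f\cdot\bar\psi\,dm$. Once this identification is in place, the Cauchy-sequence argument and the passage from $H^2$-data to $H^1$-data are routine.
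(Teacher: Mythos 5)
Your overall scheme (a uniform estimate $\|T_{\bar h}f\|_{H^p}\le C_p\|f\|_*\|h\|_1$ for $h\in H^2$, then a Cauchy-sequence argument) would indeed prove the proposition, and the estimate itself is true. But your proof of that key estimate has a genuine gap: for $0<p<1$ the Duren--Romberg--Shields duality cannot be run in the direction you use it. The theorem identifies $(H^p)^*$ with $A^\al$, $\al=1/p-1$, but $H^p$ is a non-locally-convex quasi-Banach space and its dual does \emph{not} norm it: the quantity $\sup\{|\langle F,\psi\rangle|:\|\psi\|_{\La^\al}\le1\}$ is equivalent only to the norm of $F$ in the Banach envelope of $H^p$ (the containing weighted Bergman space $\int_\D|F(z)|(1-|z|)^{1/p-2}\,dA(z)$, which is exactly what DRS identify as having the same dual), and this is strictly weaker than $\|F\|_{H^p}$ -- e.g.\ a sum of $N$ disjoint atoms has $H^p$ quasinorm of order $N^{1/p}$ but envelope norm only $O(N)$. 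So the comparability ``$\|T_{\bar h}f\|_{H^p}\approx\sup_\psi|\langle T_{\bar h}f,\psi\rangle|$'' is false, and your argument only yields convergence of $T_{\bar g_n}f$ in the envelope norm, not in $H^p$; the Cauchy argument in $H^p$ does not follow. (In the paper, DRS duality is used only in the legitimate direction: to bound a functional acting on an $H^r$ function, as in Lemma \ref{lem:hankelhphq}, never to recover an $H^p$ quasinorm with $p<1$ from the pairing.)

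The missing ingredient is Cohn's representation (Lemma \ref{lem:repr}): every $f\in\kbmo$ can be written as $f=P_+(\bar z\bar\psi\th)$ with $\psi\in H^\infty$ and $\|\psi\|_\infty=\|f\|_*$. This converts the problem of multiplying a BMOA function by $\bar g_n$ into multiplying a \emph{bounded} function by an $L^1$ function: since $\bar g_n P_-(\bar z\bar\psi\th)\in\ov{H^1_0}$, one gets $T_{\bar g_n}f=P_+(\bar g_n\bar z\bar\psi\th)$, and then the boundedness of $P_+$ from $L^1$ into $H^p$ ($0<p<1$), which you correctly invoke, gives the uniform bound $\|T_{\bar g_n}f-T_{\bar g_m}f\|_p\le C_p\|g_n-g_m\|_1\|\psi\|_\infty$ and the explicit limit $\Phi=P_+(\bar g\bar z\bar\psi\th)$ (independent of the choice of $\psi$, since two admissible $\psi$'s differ by an element of $\th H^\infty$). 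With this lemma your plan goes through essentially as in the paper; without it, or some substitute giving the estimate in terms of $\|f\|_*$ and $\|h\|_1$ alone, the duality route does not close.
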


\par This (obviously unique) function $\Phi$ is then taken to be $\tg f$, 
the image of $f$ under the Toeplitz operator $\tg$. 

\par The proof relies on the following lemma due to Cohn (see Lemma 3.2 
in \cite[p.\,731]{C1}), which in turn results from an application of the 
$(H^1,\bmoa)$ duality. 

\begin{lem}\label{lem:repr} Let $\th$ be inner, and let $f\in\kbmo$. 
Then $f=P_+(\bar z\bar\psi\th)$ for a function $\psi\in H^\infty$. 
Furthermore, $\psi$ may be chosen so that $\|f\|_*=\|\psi\|_\infty$. 
\end{lem}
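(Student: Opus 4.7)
The plan is to use Lemma \ref{lem:repr} to rewrite $T_{\bar g_n}f$ as a Riesz projection applied to a function whose $L^1$-norm depends continuously on $g_n$ (even as $n\to\infty$ with only $L^1$-convergence of $g_n$), and then invoke the weak-type $(1,1)$ bound for $P_+$ to pass to the limit in $H^p$ for each $p<1$.

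First I would apply Lemma \ref{lem:repr} to fix a function $\psi\in H^\infty$ with $f=P_+(\bar z\bar\psi\theta)$. For any $g_n\in H^2$ the product $\bar g_nf$ lies in $L^1$, so $T_{\bar g_n}f=P_+(\bar g_nf)$ is defined. Using the standard identity
$$P_+(\bar g_n\cdot P_+u)=P_+(\bar g_nu)\qquad(u\in L^2),$$
which follows from $\bar g_n(u-P_+u)\in\overline{H^1_0}$, applied to $u=\bar z\bar\psi\theta\in L^\infty$, I obtain the key rewriting
$$T_{\bar g_n}f=P_+\bigl(\bar g_n\bar z\bar\psi\theta\bigr)=P_+\bigl(\theta\,\overline{zg_n\psi}\bigr).$$
The virtue of this formula is that the argument of $P_+$ is linear in $g_n$ and has $L^1$-norm controlled by $\|\psi\|_\infty\|g_n\|_1$, because $\theta$ is bounded.

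Second, I would take the same formula as the definition of the candidate limit. Since $g\in H^1$ and $\psi\in H^\infty$, the product $zg\psi$ lies in $H^1$, hence $\theta\,\overline{zg\psi}\in L^1$ with $\|\theta\,\overline{zg\psi}\|_1\le\|\psi\|_\infty\|g\|_1$. By Kolmogorov's theorem, $P_+$ is of weak type $(1,1)$, and consequently $P_+\colon L^1\to H^p$ is bounded for every $p\in(0,1)$, with some constant $C_p$. I would therefore define
$$\Phi:=P_+\bigl(\theta\,\overline{zg\psi}\bigr)\in\bigcap_{0<p<1}H^p.$$
For an arbitrary sequence $g_n\in H^2$ with $\|g_n-g\|_1\to0$, the identity above gives
$$T_{\bar g_n}f-\Phi=P_+\bigl(\theta\,\overline{z(g_n-g)\psi}\bigr),$$
and the $L^1$-norm of the argument is at most $\|\psi\|_\infty\|g_n-g\|_1\to0$; the Kolmogorov estimate then yields $\|T_{\bar g_n}f-\Phi\|_p\le C_p\|\psi\|_\infty\|g_n-g\|_1\to0$ for each $p\in(0,1)$. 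Uniqueness of $\Phi$ — and the apparent dependence on the choice of $\psi$ — is automatic: both are fixed by the requirement that $\Phi$ is the $H^p$-limit of the sequence $\{T_{\bar g_n}f\}$, which does not involve $\psi$.

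The main obstacle, and the point where Lemma \ref{lem:repr} earns its keep, is that the raw expression $P_+(\bar g_nf)$ behaves badly under $L^1$-perturbations of $g_n$: since $f\in\bmoa$ is unbounded, $L^1$-convergence $g_n\to g$ does not transfer to $\bar g_nf\to\bar gf$ in $L^1$. Rewriting via Lemma \ref{lem:repr} shifts the unboundedness onto $\theta\in H^\infty$, after which the dependence on $g_n$ is manifestly $L^1$-continuous and Kolmogorov's theorem closes the argument.
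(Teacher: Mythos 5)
Your proposal does not prove Lemma \ref{lem:repr}: its very first substantive step is ``apply Lemma \ref{lem:repr} to fix a function $\psi\in H^\infty$ with $f=P_+(\bar z\bar\psi\th)$'', which is precisely the statement you were asked to establish. What you have written is instead a proof of Proposition \ref{prop:deftg} --- the rewriting $T_{\ov g_n}f=P_+\bigl(\bar g_n\bar z\bar\psi\th\bigr)$, the definition $\Phi:=P_+\bigl(\bar g\bar z\bar\psi\th\bigr)$, and the passage to the limit using the boundedness of $P_+\colon L^1\to H^p$ for $0<p<1$ --- and as such it matches the paper's argument for that proposition almost verbatim. But as a proof of the lemma itself it is circular and therefore vacuous.

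A genuine proof has to \emph{produce} $\psi$ from $f$, with the exact norm equality $\|\psi\|_\infty=\|f\|_*$, and this is where the real work lies. The paper does not reprove the lemma; it cites Cohn \cite{C1} and indicates that the argument is an application of the $(H^1,\bmoa)$ duality. Concretely, one starts from the functional $h\mapsto\int_\T h\bar f\,dm$ on $H^1$, whose norm is $\|f\|_*$ by definition of the dual norm on $\bmoa$; one observes that it annihilates $\th H^1$ because $f\in K_\th$ (so $T_{\bar\th}f=0$); one then transfers the induced functional to a subspace of $L^1/\ov{H^1_0}$, extends it by Hahn--Banach without increasing the norm, and uses the identification $\bigl(L^1/\ov{H^1_0}\bigr)^*\cong H^\infty$ to obtain $\psi\in H^\infty$ with $\|\psi\|_\infty=\|f\|_*$; unwinding the representation of the functional yields $\int_\T h\bar f\,dm=\int_\T h\,\ov{\bar z\bar\psi\th}\,dm$ for all $h\in H^\infty$, i.e.\ $f=P_+(\bar z\bar\psi\th)$. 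None of this appears in your write-up, so the key content of the lemma is entirely missing.
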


\par Here and below, $\|\cdot\|_*$ is the dual space norm on $\bmoa$ induced 
by $H^1$. 

\bigskip\noindent{\it Proof of Proposition \ref{prop:deftg}.} Let $f\in\kbmo$, 
$g\in H^1$, and suppose $\{g_n\}$ is a sequence of $H^2$-functions with 
$\|g_n-g\|_1\to0$. We have then 
$$T_{\ov g_n}f=P_+\left(\ov g_nP_+(\bar z\bar\psi\th)\right)
=P_+\left(\bar g_n\bar z\bar\psi\th\right),$$ 
where $\psi$ is related to $f$ as in Lemma \ref{lem:repr}. Now put 
$$\Phi:=P_+\left(\bar g\bar z\bar\psi\th\right).$$ 
This definition makes sense, since $P_+$ is applied to 
an $L^1$-function; besides, it does not depend on the choice of $\psi$. 
(Indeed, if $\psi_1$ and $\psi_2$ are both eligible in the sense of 
Lemma \ref{lem:repr}, then $\psi_1-\psi_2\in\th H^\infty$.) 
And since $P_+$ is a continuous mapping from $L^1$ to every $H^p$ 
with $0<p<1$ (cf. \cite[p.\,128]{G}), we conclude that $\Phi\in H^p$ 
and $\|T_{\ov g_n}f-\Phi\|_p\to0$ for any such $p$. 
\quad\qedsymbol

\medskip Now suppose $X$ is a Banach space of analytic functions on the disk, 
with $X\sb H^1$. We say that $X$ is a {\it $K$-space} if, for each 
$\psi\in H^\infty$, the Toeplitz operator $T_{\bar\psi}$ acts boundedly 
from $X$ to itself, with norm at most $\const\cdot\|\psi\|_\infty$. 
(This is essentially equivalent to saying that $X$ enjoys the so-called 
{\it $K$-property} of Havin. The latter was defined in \cite{H} by the 
formally weaker condition that $T_{\bar\psi}(X)\sb X$, for all 
$\psi\in H^\infty$, but the norm estimate is usually automatic.) 
\par Following \cite{H}, we remark that $X$ will be a $K$-space provided 
it is (isomorphic to) the dual of some Banach space $Y$, consisting of 
analytic functions on $\D$ and satisfying the conditions 

\smallskip (a) $H^\infty\cap Y$ is dense in $Y$, and 

\smallskip (b) for each $\psi\in H^\infty$, the multiplication operator 
$f\mapsto f\psi$ acts boundedly from $Y$ to itself, with norm at most 
$\const\cdot\|\psi\|_\infty$. 

\smallskip\noindent (It is understood that the pairing between $X$ and $Y$ 
is given by $\langle f,g\rangle:=\int_\T f\bar g\,dm$, which is meaningful 
at least for $f\in H^\infty\cap Y$ and $g\in X$.) The Toeplitz operator 
$T_{\bar\psi}:X\to X$ is then the adjoint of the multiplication map in (b), 
which justifies our claim. 

\par As examples of $K$-spaces, we list the following: 
\par$\bullet$\,\,\,$H^p$ with $1<p<\infty$, 
\par$\bullet$\,\,\,the Hardy--Sobolev spaces $H^{p,n}:=\{f\in H^p:
f^{(n)}\in H^p\}$ with $1\le p<\infty$ and $n\ge1$, 
\par$\bullet$\,\,\,$\bmoa$, and more generally, $\bmoa^{(n)}:=\{f\in H^1:
f^{(n)}\in\bmoa\}$ with $n\ge0$, 
\par$\bullet$\,\,\,the Dirichlet-type spaces $\mathcal D_w:=\{f\in H^2:
\sum_{n\ge1}w_n|\widehat f(n)|^2<\infty\}$ associated with nondecreasing 
sequences $w=\{w_n\}$ of positive numbers, 
\par$\bullet$\,\,\,the analytic Besov spaces $B^s_{p,q}$ with $s>0$, 
$p\ge1$, $q\ge1$, and in particular 
\par$\bullet$\,\,\,the classical Lipschitz--Zygmund spaces 
$A^\al:=B^{\al}_{\infty,\infty}$ with $0<\al<\infty$. 
\par\noindent We recall that $B^s_{p,q}$ is defined as the set of those 
analytic $f$ on $\D$ for which the function 
\begin{equation}\label{eqn:bes}
r\mapsto(1-r)^{n-s}\left\|f^{(n)}_r\right\|_p
\end{equation}
is in $L^q$ over the interval $(0,1)$ with respect to the measure 
$dr/(1-r)$; here $n$ is some (any) fixed integer with $n>s$ and 
$f^{(n)}_r(\ze):=f^{(n)}(r\ze)$. 
\par For most of the spaces considered, the $K$-property has been established 
by means of a duality argument, as outlined above. 
We refer to \cite{H}, where this is done for $A^\al$ and some special cases of Hardy--Sobolev 
and Besov spaces; to \cite{Sha1, Sha2} for general $H^{p,n}$ and $B^s_{p,q}$ classes, as well as 
for $\bmoa^{(n)}$; and finally to any of \cite{K, KF, R} in connection with $\mathcal D_w$ spaces. 
\par As further examples of $K$-spaces, we mention $K^p_\th$ ($1<p<\infty$) 
and $\kbmo$. Indeed, for $g\in H^\infty$, one verifies the inclusion 
$\tg(K^p_\th)\sb K^p_\th$ by noting that $K^p_\th$ is the kernel of the 
Toeplitz operator $T_{\bar\th}:H^p\to H^p$, which commutes with $\tg$. 
Then one deduces that $\tg(\kbmo)\sb\kbmo$, recalling that 
$\kbmo=K^2_\th\cap\bmoa$ and $\bmoa$ is a $K$-space. And, of course, the two 
inclusions are accompanied by the natural norm estimates: the norm of $\tg$ 
is in both cases $O(\|g\|_\infty)$, just as it happens for the containing 
spaces $H^p$ ($1<p<\infty$) and $\bmoa$. 
\par The main result of this section is as follows. 

\begin{thm}\label{thm:tb} Let $\th$ be an inner function, 
$g\in H^1$, and let $X$ be a $K$-space. The following 
are equivalent. 

\smallskip{\rm (i)} $\tg$ acts boundedly from $\kbmo$ to $X$. 

\smallskip{\rm (ii)} $\tg$ acts boundedly from $\kinf$ to $X$. 

\smallskip{\rm (iii)} The function 
$$k(z):=\f{\th(z)-\th(0)}z$$ 
satisfies $\tg k\in X$. 

\smallskip Moreover, the operator norms $\|\tg\|_{\kbmo\to X}$ 
and $\|\tg\|_{\kinf\to X}$ are comparable to each other and to 
$\|\tg k\|_X$. 
\end{thm}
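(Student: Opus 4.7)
The implication (i) $\Rightarrow$ (ii) is immediate from the continuous embedding $\kinf \hookrightarrow \kbmo$ (with $\|\cdot\|_* \lesssim \|\cdot\|_\infty$). For (ii) $\Rightarrow$ (iii) I would first check that $k(z) = (\th(z) - \th(0))/z$ lies in $\kinf$: clearly $\|k\|_\infty \le 2$, and a boundary computation yields
\begin{equation*}
k\bar\th = \bar z\bigl(1 - \th(0)\bar\th\bigr) \in \overline{H^2_0},
\end{equation*}
so that $k \in K_\th$. The estimate $\|\tg k\|_X \le 2\,\|\tg\|_{\kinf \to X}$ is then automatic.

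The heart of the proof is (iii) $\Rightarrow$ (i), and my strategy is to reduce every $\tg f$ (for $f \in \kbmo$) to an operation performed on the single element $\tg k$. Cohn's representation (Lemma \ref{lem:repr}) supplies $\psi \in H^\infty$ with $\|\psi\|_\infty = \|f\|_*$ such that $f = P_+(\bar z \bar\psi \th)$. A short calculation identifies this with
\begin{equation*}
f = T_{\bar\psi} k,
\end{equation*}
since $T_{\bar\psi}(\th/z) = P_+(\bar z\bar\psi\th) = f$, while the complementary term $\th(0)\,P_+(\bar z\bar\psi)$ vanishes because $\bar z\bar\psi$ has only strictly negative Fourier coefficients.

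Next I would exploit the composition rule $T_{\overline{h_1}} T_{\overline{h_2}} = T_{\overline{h_1 h_2}}$ for coanalytic Toeplitz operators. For $g \in H^\infty$ it produces $\tg f = \tg T_{\bar\psi} k = T_{\overline{g\psi}} k = T_{\bar\psi}(\tg k)$ on $H^2$, and combined with the $K$-property of $X$ this yields
\begin{equation*}
\|\tg f\|_X = \|T_{\bar\psi}(\tg k)\|_X \le C\,\|\psi\|_\infty\,\|\tg k\|_X = C\,\|f\|_*\,\|\tg k\|_X.
\end{equation*}
To extend this to general $g \in H^1$, I would approximate $g$ by $g_n \in H^\infty$ with $\|g_n - g\|_1 \to 0$: Proposition \ref{prop:deftg} gives $T_{\overline{g_n}} f \to \tg f$ and $T_{\overline{g_n \psi}} k \to T_{\overline{g\psi}} k$ in $H^p$ for every $p<1$, so $\tg f = T_{\overline{g\psi}} k$ as elements of $H^p$. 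A separate Fourier--spectral argument --- observing that $\tg k \in X \subset H^1$ and that $\bar\psi\,P_-(\bar g k)$ has purely negative Fourier support, hence zero analytic projection --- shows $T_{\overline{g\psi}} k = T_{\bar\psi}(\tg k)$, the right-hand side being well defined in $X$ by the $K$-property. The norm estimate then transfers verbatim and gives $\|\tg\|_{\kbmo \to X} \le C\,\|\tg k\|_X$.

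The main technical obstacle will be this last step: since $\tg$ for $g \in H^1$ is only available as the distributional limit furnished by Proposition \ref{prop:deftg}, one must justify that $\tg(T_{\bar\psi} k)$ and $T_{\bar\psi}(\tg k)$ agree as elements of a common $H^p$, even though each is built through a different continuity mechanism (approximation in the symbol $g$ on one side, the $K$-property on the other). Once this identification is in place, the comparability of the three norms drops out: the reverse bounds $\|\tg k\|_X \le 2\,\|\tg\|_{\kinf \to X} \le C\,\|\tg\|_{\kbmo \to X}$ follow from (i) $\Rightarrow$ (ii) $\Rightarrow$ (iii) together with $\|k\|_\infty \le 2$.
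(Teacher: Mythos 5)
Your argument is correct and follows essentially the same route as the paper: the two easy implications, then Cohn's representation $f=P_+(\bar z\bar\psi\th)=T_{\bar\psi}k$ with $\|\psi\|_\infty=\|f\|_*$, the commutation of coanalytic Toeplitz operators to get $\tg f=T_{\bar\psi}(\tg k)$, and the $K$-property of $X$ to conclude $\|\tg f\|_X\le C\|f\|_*\|\tg k\|_X$. The extra approximation step you flag for $g\in H^1$ is exactly what the paper dispatches with its parenthetical remark that $T_{\bar a}T_{\bar b}=T_{\bar a\bar b}$ whenever the operators involved are well defined (together with the definition of $\tg$ via Proposition \ref{prop:deftg}), so your more explicit justification is a welcome but inessential elaboration of the same proof.
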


\par In most -- perhaps all -- cases of interest, condition 
(iii) above can be further rephrased by saying that $\tg\th\in X$. 
In fact, since $k=T_{\bar z}\th$ and $T_{\bar z}\tg=\tg T_{\bar z}$, 
the implication 
$$\tg\th\in X\implies\tg k\in X$$ 
holds whenever $X$ is a $K$-space. The converse is true provided that 
$1\in X$ and $zX\sb X$; indeed, 
$$\tg\th=\text{\rm const}+z\tg k.$$ 
In particular, we certainly have $\tg k\in X\iff\tg\th\in X$ when $X$ 
is one of our smoothness classes, such as $H^{p,n}$, $B^s_{p,q}$, $A^\al$ 
or $\bmoa^{(n)}$, let alone $H^p$ and $\bmoa$. The theorem then states 
that the inclusion $\tg f\in X$ holds for all $f\in\kbmo$ if and only if 
it holds for $f=\th$. 

\par The next fact is obtained by applying Theorem \ref{thm:tb} with 
$g\equiv1$, in which case $\tg$ reduces to the identity map. 

\begin{cor}\label{cor:unity} Given an inner function $\th$ and a $K$-space 
$X$, one has 
\begin{equation}\label{eqn:chain}
\kbmo\sb X\iff\kinf\sb X\iff k\in X.
\end{equation}
\end{cor}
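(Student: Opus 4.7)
The plan is to deduce Corollary \ref{cor:unity} from Theorem \ref{thm:tb} by specializing to $g\equiv 1$, as the paper itself suggests. With this choice $g\in H^\infty\subset H^1$, and for any $f\in H^1$ the definition gives
\[
T_{\bar g}f \;=\; P_+(\bar g\cdot f) \;=\; P_+ f \;=\; f,
\]
so that $T_{\bar g}$ restricts to the identity map on every subspace of $H^1$; in particular it is the identity on $\kbmo$ and on $\kinf$, and $T_{\bar g}k = k$, where $k = T_{\bar z}\th = (\th-\th(0))/z$.

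Under this identification the three clauses of Theorem \ref{thm:tb} translate into: (i) the inclusion $\kbmo\hookrightarrow X$ is bounded; (ii) the inclusion $\kinf\hookrightarrow X$ is bounded; (iii) $k\in X$. Their equivalence then follows at once from Theorem \ref{thm:tb}. I would verify in passing that $k\in\kinf$, so that clause (ii) of the theorem genuinely concerns $T_{\bar g}k$: the boundary computation gives $\bar k\th = z(1-\overline{\th(0)}\th)$ on $\T$, which lies in $H^\infty_0$, whence $k\in\th\,\ov{H^2_0}\cap H^2 = K_\th$; and the maximum principle applied to $\th-\th(0)$ yields $\|k\|_\infty\le 2$, so $k\in\kinf$.

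Finally, to pass from bounded embeddings to the set-theoretic inclusions stated in \eqref{eqn:chain}, I would invoke the closed graph theorem: $X$ is by hypothesis a Banach space continuously contained in $H^1$, and so are $\kbmo$ and $\kinf$, so any set-theoretic inclusion of one of these spaces into $X$ is automatically a bounded embedding. The converse direction is trivial. The entire argument is a direct dictionary translation between two formulations of the same statement, and no substantive obstacle is anticipated once Theorem \ref{thm:tb} is in hand.
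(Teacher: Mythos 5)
Your proof is correct and is essentially the paper's own argument: apply Theorem \ref{thm:tb} with $g\equiv1$, so that $\tg$ becomes the identity and $\tg k=k$. One remark: the closed-graph step is superfluous (and quietly assumes a continuity of $X\hookrightarrow H^1$ not stated in the definition of a $K$-space) — the chain closes set-theoretically, since $k\in X$ gives the bounded inclusion $\kbmo\sb X$ by (iii)$\Rightarrow$(i) of the theorem, while $\kbmo\sb X\Rightarrow\kinf\sb X\Rightarrow k\in X$ is trivial because $k\in\kinf\sb\kbmo$.
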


\par And since the latter condition, $k\in X$, is implied by (and is usually 
equivalent to) saying that $\th\in X$, the nontrivial part of 
\eqref{eqn:chain} amounts to the implication 
\begin{equation}\label{eqn:pup}
\th\in X\implies\kbmo\sb X.
\end{equation}

\bigskip\noindent{\it Proof of Theorem \ref{thm:tb}.} The part 
(i.1)$\implies$(ii.1) is trivially true, as is the inequality 
$$\|\tg\|_{\kinf\to X}\le\|\tg\|_{\kbmo\to X}.$$ 
The part (ii.1)$\implies$(iii.1), along with the estimate 
$$\|\tg\|_{\kinf\to X}\ge\f12\|\tg k\|_X,$$ 
is also obvious, since $k\in\kinf$ and $\|k\|_\infty\le2$. 
\par What remains to be proved is the implication (iii.1)$\implies$(i.1) 
and its quantitative version 
\begin{equation}\label{eqn:normest}
\|\tg\|_{\kbmo\to X}\le\text{\rm const}\cdot\|\tg k\|_X.
\end{equation}
To this end, we fix $f\in\kbmo$ and then invoke Lemma \ref{lem:repr} 
to find a function $\psi\in H^\infty$ such that 
$$f=T_{\bar z\bar\psi}\th,\qquad\|f\|_*=\|\psi\|_\infty.$$ 
Using the fact that coanalytic Toeplitz operators commute (and moreover, 
$T_{\bar a}T_{\bar b}=T_{\bar a\bar b}$ whenever $a$, $b$ and $ab$ are 
$H^1$-functions such that the operators involved are all well-defined), 
we obtain 
\begin{equation}\label{eqn:slon}
\tg f=\tg T_{\bar z\bar\psi}\th=T_{\bar\psi}\tg T_{\bar z}\th
=T_{\bar\psi}\tg k.
\end{equation}
Finally, we recall that $X$ is a $K$-space to get 
\begin{equation*}
\begin{aligned}
\|\tg f\|_X&\le\|T_{\bar\psi}\|_{X\to X}\|\tg k\|_X\\
&\le\text{\rm const}\cdot\|\psi\|_\infty\|\tg k\|_X\\
&=\text{\rm const}\cdot\|f\|_*\|\tg k\|_X,
\end{aligned}
\end{equation*} 
which readily implies \eqref{eqn:normest}.\quad\qedsymbol

\medskip Finally, we supplement Theorem \ref{thm:tb} with the following result. 

\begin{prop}\label{prop:lpbound} Let $\th$, $g$ and $k$ be 
as above. The operator $\tg$ acts boundedly from $\kbmo$ to itself 
if and only if $\tg k\in\bmoa$. In this case we also have 
$$\|\tg f\|_p\le C_p\|\tg k\|_*\|f\|_p,\qquad 1<p<\infty,$$ 
for all $f\in\kinf$, so that $\tg$ extends to a bounded operator 
on $K^p_\th$. 
\end{prop}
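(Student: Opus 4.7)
My plan has two independent pieces. The equivalence in the first statement I would derive directly from Theorem \ref{thm:tb} applied with $X=\bmoa$ (which is a $K$-space, as listed among the examples), combined with the commutation $T_{\bar\th}\tg=\tg T_{\bar\th}$ for coanalytic Toeplitz operators. Every $f\in\kbmo$ satisfies $T_{\bar\th}f=0$, hence $T_{\bar\th}(\tg f)=0$, so $\tg f$ lies in $K_\th\cap\bmoa=\kbmo$ as soon as one knows $\tg f\in\bmoa$. This upgrades the $\kbmo\to\bmoa$ boundedness supplied by Theorem \ref{thm:tb} to $\kbmo\to\kbmo$ boundedness. The converse is immediate: since $k\in\kinf\subset\kbmo$, the inclusion $\tg(\kbmo)\subset\kbmo$ forces $\tg k\in\kbmo\subset\bmoa$.

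For the $L^p$-estimate on $\kinf$, I would re-use the identity \eqref{eqn:slon}, namely
\[
\tg f=T_{\bar\psi}(\tg k),
\]
from the proof of Theorem \ref{thm:tb}, but with a more quantitative choice of $\psi$. Given $f\in\kinf$, set $\psi:=\bar z\bar f\th$, the image of $f$ under the modulus-preserving anti-isometry of $K_\th$ onto itself that appeared already in the proof of Theorem \ref{thm:orthdir}; then $\psi\in\kinf$, $|\psi|=|f|$ on $\T$, and $\|\psi\|_p=\|f\|_p$ for every $p$. A direct boundary-value calculation shows $\bar z\bar\psi\th=f$, whence $f=T_{\bar z\bar\psi}\th$, and the same coanalytic Toeplitz commutation used in the proof of Theorem \ref{thm:tb} then produces the displayed identity.

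It remains to estimate $\|T_{\bar\psi}(\tg k)\|_p$, and for this I would invoke $H^p$-$H^{p'}$ duality together with Fefferman's $(H^1)^*=\bmoa$ duality. For $h\in H^{p'}$ with $\|h\|_{p'}\le1$, the fact that $\bar h$ carries only nonpositive Fourier coefficients (and the $L^p$-boundedness of $P_-$) lets one drop the projection:
\[
\int_\T T_{\bar\psi}(\tg k)\cdot\bar h\,dm=\int_\T\overline{\psi h}\cdot\tg k\,dm.
\]
H\"older in the Hardy scale bounds $\|\psi h\|_{H^1}\le\|\psi\|_p\|h\|_{p'}$, and Fefferman duality then bounds the integral by $C\|\psi\|_p\|h\|_{p'}\|\tg k\|_*$. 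Taking the supremum over $h$ yields $\|\tg f\|_p\le C_p\|\tg k\|_*\|f\|_p$. Since the functions $P_\th(z^n)=z^n-\th\sum_{j=0}^n\overline{\widehat\th(j)}z^{n-j}$ are manifestly in $\kinf$ and span a dense subspace of $K^p_\th$ for $1<p<\infty$, the operator $\tg$ extends uniquely to a bounded operator on $K^p_\th$.

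The main obstacle is really bookkeeping around the unbounded symbol $\tg k$: since it lies in $\bmoa$ but not in general in $H^\infty$, one must check at each step that intermediate products lie in $L^1$ and that the composition rule $T_{\bar a\bar b}=T_{\bar a}T_{\bar b}$ invoked in the proof of Theorem \ref{thm:tb} applies. The inclusion $\bmoa\subset\bigcap_{q<\infty}H^q$ together with the $L^q$-boundedness of $P_\pm$ for $1<q<\infty$ render these verifications routine.
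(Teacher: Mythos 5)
Your proposal is correct and follows essentially the same route as the paper: the first statement via Theorem \ref{thm:tb} with $X=\bmoa$, and the $L^p$ bound via the identity $\tg f=T_{\bar\psi}(\tg k)$ with $\psi:=\bar z\bar f\th$, followed by density of $\kinf$ in $K^p_\th$. The only (minor) divergence is at the final estimate: where the paper rewrites $T_{\bar\psi}(\tg k)$ as a Hankel operator with $\bmoa$ symbol and cites Nehari's theorem, you obtain the same bound $C_p\|\tg k\|_*\|f\|_p$ directly from $H^p$--$H^{p'}$ duality combined with Fefferman's $(H^1)^*=\bmoa$ duality, which is just an inline proof of the Nehari-type estimate being invoked.
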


\par This might be compared to the \lq\lq $T(1)$-" 
and/or \lq\lq $T(b)$-theorem" of David, Journ\'e and Semmes 
(cf. \cite[Chapter 5]{DynEnc} or \cite[Chapter VII]{S}), results 
that provide boundedness criteria for certain singular integral operators 
on $L^p$. Just as in those theorems, we only have to test the operator 
on a single function. We also remark that the assumption $\tg k\in\bmoa$ 
can be rewritten as $\tg\th\in\bmoa$, and a sufficient condition for this 
to happen is that 
$$\sup\{|g(z)|:\,z\in\Om(\th,\eps)\}<\infty$$
for some $\eps\in(0,1)$, where $\Om(\th,\eps)$ is the sublevel set defined 
by \eqref{eqn:defsublevel}. A proof of this last assertion can be found 
in \cite{DAJM}. 

\medskip\noindent{\it Proof of Proposition \ref{prop:lpbound}.} The first 
statement, concerning the action of $\tg$ on $\kbmo$, is obtained by 
applying Theorem \ref{thm:tb} with $X=\bmoa$ (or $X=\kbmo$). 
\par Now suppose $\tg k\in\bmoa$, and let $1<p<\infty$. Given a function 
$f\in\kinf$, put $\psi:=\bar z\bar f\th(=\widetilde f)$ and note that 
$\psi\in H^\infty$. We have then 
$$f=\bar z\bar\psi\th=P_+\left(\bar z\bar\psi\th\right)
=T_{\bar z\bar\psi}\th,$$ 
and so \eqref{eqn:slon} remains in force. Setting $h:=\tg k$ and making 
use of the elementary identity 
$$\ov{P_+F}=zP_-(\bar z\bar F),\qquad F\in L^1,$$
we can rewrite the resulting equality from \eqref{eqn:slon} as 
$$\tg f=T_{\bar\psi}h=\bar z\ov{H_{\bar z\bar h}\psi}.$$ 
In view of Nehari's theorem (see, e.\,g., \cite[Part B, Chapter 1]{N}), 
the assumption that $h$, and hence $zh$, is in $\bmoa$ implies 
that the Hankel operator $H_{\bar z\bar h}$ acts boundedly 
from $H^p$ to $\ov{H^p_0}$, with norm not exceeding $C_p\|h\|_*$. 
Consequently, 
$$\|\tg f\|_p=\left\|H_{\bar z\bar h}\psi\right\|_p\le C_p\|h\|_*\|\psi\|_p
=C_p\|h\|_*\|f\|_p,\qquad f\in\kinf.$$ 
Finally, since $\kinf$ is dense in $K^p_\th$ (indeed, $\kinf$ contains 
the family of reproducing kernels for $K^2_\th$), we conclude that $\tg$ 
extends to a bounded operator on $K^p_\th$, with the same norm. 
The proof is complete.\quad\qedsymbol

\medskip

\end{document}